\begin{document}

\newcommand{\mmbox}[1]{\mbox{${#1}$}}
\newcommand{\proj}[1]{\mmbox{{\mathbb P}^{#1}}}
\newcommand{\Cr}{C^r(\Delta)}
\newcommand{\CR}{C^r(\hat\Delta)}
\newcommand{\affine}[1]{\mmbox{{\mathbb A}^{#1}}}
\newcommand{\Ann}[1]{\mmbox{{\rm Ann}({#1})}}
\newcommand{\caps}[3]{\mmbox{{#1}_{#2} \cap \ldots \cap {#1}_{#3}}}
\newcommand{\Proj}{{\mathbb P}}
\newcommand{\K}{{\mathbb K}}
\newcommand{\N}{{\mathbb N}}
\newcommand{\Z}{{\mathbb Z}}
\newcommand{\R}{{\mathbb R}}
\newcommand{\A}{{\mathcal{A}}}
\newcommand{\Tor}{\mathop{\rm Tor}\nolimits}
\newcommand{\Ext}{\mathop{\rm Ext}\nolimits}
\newcommand{\Hom}{\mathop{\rm Hom}\nolimits}
\newcommand{\im}{\mathop{\rm Im}\nolimits}
\newcommand{\rank}{\mathop{\rm rank}\nolimits}
\newcommand{\supp}{\mathop{\rm supp}\nolimits}
\newcommand{\arrow}[1]{\stackrel{#1}{\longrightarrow}}
\newcommand{\CB}{Cayley-Bacharach}
\newcommand{\coker}{\mathop{\rm coker}\nolimits}
\sloppy
\newtheorem{defn0}{Definition}[section]
\newtheorem{prop0}[defn0]{Proposition}
\newtheorem{quest0}[defn0]{Question}
\newtheorem{thm0}[defn0]{Theorem}
\newtheorem{lem0}[defn0]{Lemma}
\newtheorem{corollary0}[defn0]{Corollary}
\newtheorem{example0}[defn0]{Example}
\newtheorem{remark0}[defn0]{Remark}
\newtheorem{conj0}[defn0]{Conjecture}
\newtheorem{notation0}[defn0]{Notation}

\newenvironment{defn}{\begin{defn0}}{\end{defn0}}
\newenvironment{prop}{\begin{prop0}}{\end{prop0}}
\newenvironment{quest}{\begin{quest0}}{\end{quest0}}
\newenvironment{thm}{\begin{thm0}}{\end{thm0}}
\newenvironment{lem}{\begin{lem0}}{\end{lem0}}
\newenvironment{cor}{\begin{corollary0}}{\end{corollary0}}
\newenvironment{exm}{\begin{example0}\rm}{\end{example0}}
\newenvironment{rem}{\begin{remark0}\rm}{\end{remark0}}

\newcommand{\defref}[1]{Definition~\ref{#1}}
\newcommand{\propref}[1]{Proposition~\ref{#1}}
\newcommand{\thmref}[1]{Theorem~\ref{#1}}
\newcommand{\lemref}[1]{Lemma~\ref{#1}}
\newcommand{\corref}[1]{Corollary~\ref{#1}}
\newcommand{\exref}[1]{Example~\ref{#1}}
\newcommand{\secref}[1]{Section~\ref{#1}}
\newcommand{\remref}[1]{Remark~\ref{#1}}
\newcommand{\questref}[1]{Question~\ref{#1}}

\newcommand{\std}{Gr\"{o}bner}
\newcommand{\jq}{J_{Q}}

\def\demo{\noindent {\bf Proof.}\;}

\newcommand{\rar}{\rightarrow}
\newcommand{\lar}{\longrightarrow}
\newcommand{\llar}{-\kern-5pt-\kern-5pt\longrightarrow}
\newcommand{\surjects}{\twoheadrightarrow}
\newcommand{\injects}{\hookrightarrow}

\newcommand{\Fiber}{{\cal F}}

\newcommand{\U}{\mbox{U}}
\renewcommand{\P}{\mbox{P}}
\def\fm{{\mathfrak m}}
\def\fn{{\mathfrak n}}
\def\ii{\'{\i}}


\def\Ree#1{{\cal R}(#1)}
\def\gr{\rm gr}
\def\Ht{{\rm ht}\,}
\def\depth{{\rm depth}\,}
\def\ass{{\rm Ass}\,}
\def\codim{{\rm codim}\,}
\def\edim{{\rm edim}\,}
\def\ecodim{{\rm ecodim}\,}
\def\ker{{\rm ker}\,}
\def\grade{{\rm grade}\,}
\def\rk{{\rm rank}\,}
\def\syz{\mbox{\rm Syz}}
\def\sym#1{\mbox{\rm Sym}(#1)}
\def\spec#1{{\rm Spec}\, (#1)}
\def\supp#1{{\rm Supp}\, (#1)}
\def\br#1#2{{\rm Branch}\, (#1/#2)}
\def\bdel{{\bf\Delta}}
\def\bd{{\bf D}}
\def\bfd{{\bf D}}
\def\grD{\sum_{j\geq 0} \dd^j/\dd^{j+1}}
\def\ne{{\mathbb E}}
\def\nf{{\mathbb F}}
\def\sse{\Leftrightarrow}
\def\ini{\mbox{\rm in}}
\def\join{{\mathfrak J}(I,J)}
\def\spanal{{\mathbb S}\mbox{\rm p}}
\def\der{\mbox{\rm Der}}
\def\dera#1{\mbox{\rm Der}_k(#1)}
\def\highder#1#2{\mbox{\rm Der}_k^{\,#1}(#2)}
\def\ldera#1{\mbox{\rm Lder}_k(#1)}
\def\derab#1#2{\mbox{\rm Der}_k(#1,#2)}
\def\highdif#1#2{\mbox{\rm Diff}_k^{\,#1}(#2)}
\def\I{{\cal I}}
\def\O{{\cal O}}
\def\restr{{\kern-1pt\restriction\kern-1pt}}
\def\ldasharrow{{--\dasharrow}}

\def\Q{{\mathbb Q}}
\def\R{{\mathbb R}}
\def\C{{\mathbb C}}
\def\Z{{\mathbb Z}}
\def\NN{\mathbb N}
\def\dd{{\mathbb D}}
\def\za{{\mathbb S}_{A/k}}
\def\sta{{\mathbb T}_{A/k}}
\def\cd#1#2{{\mathfrak c}(#1/#2)}
\def\pp{{\mathbb P}}
\def\GG{{\mathbb G}}
\def\XXX{{\mathbb X}}
\def\YYY{{\mathbb Y}}
\def\TTT{{\mathbb T}}


\title{On the Jacobian ideal of central arrangements}

\author{Ricardo Burity, Aron Simis and \c{S}tefan O. Toh\v{a}neanu}

	\subjclass[2010]{Primary 13A30; Secondary 13C15, 14N20, 52C35} \keywords{Jacobian ideal, arrangement of hypersurfaces, depth, reduction of an ideal. \\
		\indent The first author was partially supported by CAPES  (Brazil) (grant: PVEX - 88881.336678/2019-01). \\
		\indent The second author was partially supported by a grant from CNPq (Brazil) (302298/2014-2).\\
		\indent Burity's address: Departamento de Matemática, Universidade Federal da Paraiba, J. Pessoa, Paraiba, 58051-900, Brazil,
		Email: ricardo@mat.ufpb.br.\\
        \indent Simis' address: Departamento de Matemática, Universidade Federal da Pernambuco, Recife, Pernambuco, 50740-560, Brazil,
		Email: aron@dmat.ufpe.br.\\
		\indent Tohaneanu's address: Department of Mathematics, University of Idaho, Moscow, Idaho 83844-1103, USA, Email: tohaneanu@uidaho.edu.}

\begin{abstract}
Let $\mathcal{A}$ denote a  central hyperplane arrangement of rank $n$ in affine space $\mathbb{K}^n$ over an infinite field $\mathbb{K}$ and let $l_1,\ldots, l_m\in R:= \K[x_1,\ldots,x_n]$ denote the linear forms defining the corresponding hyperplanes, along with the corresponding defining polynomial $f:=l_1\cdots l_m\in R$.
Let $J_f$ denote the ideal generated by the partial derivatives of $f$ and let $\mathbb{I}$ designate the ideal generated by the $(m-1)$-fold products of $l_1,\ldots, l_m$.
This paper is centered on the relationship between the two ideals $J_f, \mathbb{I}\subset R$, their properties and two conjectures related to them.
Some parallel results are obtained in the case of forms of higher degrees provided they fulfill a certain transversality requirement.

\end{abstract}

\maketitle

\section*{Introduction}

Let $\mathcal{A}$ denote a  central hyperplane arrangement of rank $n$ in affine space $\mathbb{K}^n$ over an infinite field $\mathbb{K}$ and let $l_1,\ldots, l_m\in R:=\mathbb K[x_1,\ldots,x_n]$ denote the linear forms defining the corresponding hyperplanes.
Set $f:=l_1\cdots l_m\in R$, the defining polynomial of $\mathcal{A}$. The module of {\em logarithmic derivations} associated to $f$ is defined as $\mathrm{Derlog}(\mathcal{A}):=\{\theta\in {\rm Der}(R)| \theta (f)\subset \langle f \rangle\}.$ Its $R$-module structure  is well known if ${\rm char}(\mathbb K)$ does not divide $m=\mathrm{deg}(f)$:
\begin{equation*}
\mathrm{Derlog}(\mathcal{A})= \mathrm{Syz}(J_f)\oplus R\theta_E.
\end{equation*}
Here $\mathrm{Syz}(J_f)\subset R^n$ is up to the identification ${\rm Der}(R)=R^n$ the syzygy module of the  partial derivatives of the $f$ and $\theta_E=\sum_{i=1}^{n}x_i\frac{\partial}{\partial{x_i}}$ is the Euler derivation.

In the case where the arrangement is generic, -- meaning that every subset of $\{l_1,\ldots,l_m\}$ with $n$ elements is $\K$-linearly independent -- and that $m\geq n+1$, Rose-Terao (\cite{RoseTerao}) and Yuzvinsky (\cite{Yuzvinsky}) have established that the homological dimension of $\mathrm{Derlog}(\mathcal{A})$ is $n-2$.   The statement is equivalent to having  ${\rm depth}(R/J_f)=0$, or still that the irrelevant maximal ideal $\fm:=\langle x_1,\ldots,x_n\rangle$ is an associated prime of $R/J_f$.
In their beautiful paper,  Rose and Terao took the approach of  establishing explicit free resolutions of the modules $\Omega^q(\mathcal{A})$  of logarithmic $q$-forms, for $0\leq q\leq n-1$, in the spirit of Lebelt's work (\cite{Lebelt}). Then, drawing on the isomorphism $\mathrm{Derlog}(\mathcal{A})\simeq \Omega^{n-1}(\mathcal{A})$, they derived the free resolution of the module of logarithmic derivations, and hence of $R/J_f$ as well.

Motivated by this, the present authors consider the problem  under a more intrinsic perspective, trying to envisage what can be said, avoiding details of free resolutions,  by drawing upon a few well established facts coming from commutative algebra.
As such, it seemed natural to ask if there is any result of this sort in the case of forms $f_1,\ldots,f_m\in R, m\geq 2$, of degrees $\deg(f_i)=d_i\geq 2, i=1,\ldots,m$.
Accordingly, the paper is focused on two major aspects: first, the nature of $J_f$ from the ideal theoretic and homological point of view, for a central generic hyperplane arrangement; second, the nature of $J_f$ for arrangements of forms that are isolated singularities.

A full grasp of the algebraic properties of a central hyperplane arrangement does not seem to be available.
As is well-known to experts, even in the plane case a question as to how the combinatorial side  affects the homological properties of the Jacobian ideal of the defining polynomial of the arrangement is not entirely settled. The best example is Terao's conjecture which states that, over a field of characteristic zero, freeness of $\mathrm{Derlog}(\mathcal{A})$ depends only on the combinatorics.


For a  central hyperplane arrangement $\mathcal{A}$ of rank $n$ over an infinite field $\mathbb{K}$ one introduces the ideal $\mathbb I\subset R$ generated by the $(m-1)-$fold products of the defining linear forms of $\mathcal{A}$.
While working on the above problem, we came up with the following conjecture, previously unknown at least to us:

{\sc Conjecture 1.} {\rm (${\rm char}(\K)\nmid m$)}\;
Assume that $\mathcal{A}$ is a central hyperplane arrangement of rank $n$ and size $m$. Let $\mathbb{I}\subset R=\K[x_1,\ldots,x_n]$ denote the ideal generated by the $(m-1)$-fold products of $l_1,\ldots,l_m$. Then the ideal $J_f\subset R$ generated by the partial derivatives of $f$ is a minimal reduction of $\mathbb{I}$.

Since $\mathcal{A}$ has rank $n$, one has $m\geq n$. The case where $m=n$ is clear because $J_f=\mathbb{I}$ since they have the same number of minimal generators, and the analytic spread of $\mathbb{I}$ is $n$.

We answer affirmatively this conjecture assuming that every subset of $\{l_1,\ldots,l_m\}$ with $n-1$ elements is $\K$-linearly independent -- this includes the generic case and the arbitrary $n=3$ case.

While focusing on this conjecture, we felt naturally compelled to ask about further behavior pattern by the Jacobian ideal of a central hyperplane arrangement.
We thus came up with a second question:

{\sc Conjecture 2.}  {\rm (${\rm char}(\K)\nmid m$)}\;
Let $f\in R=\K[x_1,\ldots,x_n]$ denote the defining polynomial of a central hyperplane arrangement of size  $m\geq n$. Then the Jacobian ideal $J_f$ is of linear type.

The conjecture trivially holds for $n=2$.
In this work we show that this conjecture holds in complete generality for $n= 3$.
In the generic case and arbitrary $n\geq 3$, we show that $J_f$ at least satisfies condion $G_{\infty}$ (also denoted $F_1$), which is the so to say  easier half of the linear type property.

We now describe the main contents of the paper.

Under the assumption that $\mathcal{A}$ is generic and $m\geq n+1$, the first section deals with the ideal theoretic and homological properties of the Jacobian ideal $J_f$.
We take a kind of ad hoc approach, by closely establishing the algebraic properties of $J_f$ and some of the invariants attached to it.
From these, we derive that $\depth R/J_f=0$ and that its minimal free resolution is $2$-step linear with syzygy module equigenerated in degree $m-n+1$.
Among those properties, we give a priori the exact value of the initial degree of $\syz(J_f)$,  the regularity and the saturation exponent of $J_f$.
When $n\geq 3$ we show that the saturation of $J_f$ with respect to $\fm$ is the ideal $\mathbb{I}$ generated by the $(m-1)$-fold products of $f$.

The second section looks more closely at the inclusion $J_f\subset \mathbb{I}$.
We already know that these ideals share the same radical, but wish to go further to obtain that this inclusion is  a minimal reduction situation, which is the content of the first of the above conjectures.
By drawing upon a couple of methods, we verify this conjecture in a first few cases, including the case where $n=2$ and the case where $n=3$ with a coloop.
We also describe an effective partition method.

Unfortunately, none of this is directly applicable to the arbitrary generic case, which requires a separate argument.
This is then the main result of the section.
The proof is a kind of descent argument involving the various types of natural generators of the critical power $\mathbb{I}^n$. The reason one aims at this power is because $n-1$ is an upper bound for the reduction number of $\mathbb{I}$ (\cite[Corollary 2.6 (d)]{GaSiTo}), the later being universally well-defined since the special fiber of $\mathbb{I}$ (the Orlik--Terao algebra) is Cohen--Macaulay by \cite{ProudSpeyer}).

The section ends with the announced results on the linear type property of the Jacobian ideal $J_f$ that include an affirmative answer to the second of the above conjectures when $n=3$. As a consequence we obtain a characterization of freeness in rank $3$ in terms of the Rees algebra of the corresponding Jacobian ideal.

The third section is largely experimental as it stands.
As it aims at arrangement of forms of degrees $\geq 2$, in whatever analogy exists to the hyperplane case, we introduce conditions on these forms that mimic some of the properties of the latter.
We have chosen to work with a couple of conditions, under the designation of {\em near transversality}.
This includes assuming that every individual form $f_i$ is such that Proj$(R/(f_i))$ is smooth, a sort of extravagant requirement to mimic the fact that a straight line is smooth. It also includes the requirement that the forms be mutually non-tangent, a condition that we chose to encode via a Jacobian matrix in relation to certain primes.
Since the degrees of the forms are quite arbitrary, there is no apparent obstruction to assume only that $m\geq 2$.
The case of two forms is already sufficiently appealing, so we prove a main result under the near transversality assumption (or close to this).
For $m$ arbitrary, we also state a lower bound for the initial degree of the Jacobian ideal of the product of the forms. Finally, under a special condition on the degrees of the forms we are able to obtain the analog of Rose--Terao--Yuzvinsky theorem.

\section{The Jacobian ideal of a hyperplane arrangement}

\subsection{The initial degree of the syzygies}\label{indeg_of_Jacobian}

Let $\mathcal{A}$ denote a central hyperplane arrangement in affine space $\mathbb{K}^n$ over an infinite field $\mathbb{K}$.
Although the notions make sense in positive characteristic, the use of the Euler relation in the main results require that char$(\mathbb{K})$ does not divide the degree of the defining polynomial of the arrangement, a hypothesis that we take for granted throughout refraining from stating it on spot.

For our purpose it will be convenient to think of $\mathcal{A}$ as being given by mutually independent linear forms $l_1,\ldots,l_m$ in the polynomial ring $R:=\mathbb{K}[x_1,\ldots, x_n]$.
We assume throughout that $n\geq 2$ and $m\geq n+1$.

The arrangement is said to be {\em generic} if any $n$ of the defining linear forms are $\mathbb{K}$-linearly independent.
Clearly, an arrangement for which the coefficients of its forms are randomly picked is automatically generic and this conveys a fast way of producing generic arrangements.

Let $f:=l_1\cdots l_m\in R$ be the defining polynomial of the  arrangement.
We will closely focus on the Jacobian (or gradient) ideal $J_f\subset R$ of $f$, i.e., the ideal generated by the partial derivatives $f_{x_i}:=\partial f/\partial x_i \, (1\leq i\leq n)$ of $f$.
By abuse, we let $\mathrm{Syz}(J_f)$ denote the module of first syzygies of $\{f_{x_1},\ldots, f_{x_n}\}$.

Fixing a linear form $l=l_i$ of the arrangement, one has the well-known notions of {\em deletion} $\mathcal A\setminus \{H\}$ and {\em restriction} $\mathcal A^H$ of $\mathcal A$ with respect to a given hyperplane $H=V(l_i)$.
In algebraic terms, deletion is just the arrangement of the forms $l_1,\ldots,l_{i-1},\widehat{l_i}, l_{i+1},\ldots, l_m$ in the same polynomial ring $R$.
Restriction is the arrangement of the residual forms $\{\langle l_i,l_j\rangle/\langle l_i\rangle\,|\; j\neq i\}$ in the residual  ring $R/\langle l_i\rangle$, upon identification of the latter with a polynomial ring over $\mathbb{K}$ in $n-1$ variables.

By this description, it is immediate to verify that if $\mathcal A$ is a generic arrangement, then so are the respective deletion and restriction arrangements with respect to any of the defining linear forms.

Borrowing from the notation of \cite{TaDiSti}, we set $r(\mathcal A):={\rm indeg}(\mathrm{Syz}(J_f))$ for the corresponding initial degree, i.e.,
$$r(\mathcal A):= \min_{r\in\mathbb Z}\{r|(\mathrm{Syz}(J_f))_r\neq 0\}.$$

We will make use of the following theorem:

\begin{thm}\label{Addition-deletion}  {\rm (\cite[Theorem 2.14]{TaDiSti})}  
	Let $l=l_i$ be a linear form defining the hyperplane $H$ of a central arrangement $\mathcal{A}$. If $r(\mathcal A\setminus \{H\}) < r(\mathcal A^H)$, where $H=V(l)$, then
	$r(\mathcal{A})=r(\mathcal A\setminus \{H\}) +1$.
\end{thm}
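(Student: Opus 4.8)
The plan is to sandwich $r(\mathcal{A})$ by proving the two inequalities $r(\mathcal{A})\le r(\mathcal{A}\setminus\{H\})+1$ and $r(\mathcal{A})\ge r(\mathcal{A}\setminus\{H\})+1$ separately, with only the second using the hypothesis $r(\mathcal{A}\setminus\{H\})<r(\mathcal{A}^H)$. Write $l=l_i$, let $g:=f/l$ be the defining polynomial of the deletion $\mathcal{A}':=\mathcal{A}\setminus\{H\}$, and let $\mathcal{A}'':=\mathcal{A}^H$ be the restriction, with induced Euler derivation $\theta_{\bar E}$ on $R/\langle l\rangle$. I would run the whole argument on logarithmic derivations rather than on free resolutions, exploiting the splitting $\mathrm{Derlog}(\mathcal{B})=\mathrm{Syz}(J)\oplus R\theta_E$ (with $J$ the Jacobian ideal of the defining polynomial of $\mathcal{B}$) for each of $\mathcal{B}\in\{\mathcal{A},\mathcal{A}',\mathcal{A}''\}$. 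In degree $d$ below the initial degree of the syzygies this gives the clean fact $\mathrm{Derlog}(\mathcal{B})_d=R_{d-1}\theta_E$: every logarithmic derivation of degree $<r(\mathcal{B})$ is a multiple of the Euler derivation. This ``only Euler in low degrees'' statement is the engine of the proof.

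For the upper bound I would exhibit an explicit syzygy of $J_f$ of degree $r(\mathcal{A}')+1$. Starting from a nonzero $\eta\in\mathrm{Syz}(J_g)$ of minimal degree $r(\mathcal{A}')$, so that $\eta(g)=0$, set
$$\theta:=l\,\eta-\frac{\eta(l)}{m}\,\theta_E.$$
A one-line computation using $\eta(f)=\eta(l)\,g$ and $\theta_E(f)=mf$ shows $\theta(f)=0$, so $\theta\in\mathrm{Syz}(J_f)$, homogeneous of degree $r(\mathcal{A}')+1$. Its non-vanishing is the only point to check: if $\theta=0$ then the coefficient vector of $\eta$ is proportional to $(x_1,\dots,x_n)$, forcing $\eta\in R\theta_E$, impossible for a nonzero syzygy since $\mathrm{Syz}(J_g)\cap R\theta_E=0$. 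Hence $r(\mathcal{A})\le r(\mathcal{A}')+1$, a half that needs no hypothesis.

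For the lower bound I would use the restriction exact sequence
$$0\longrightarrow l\cdot\mathrm{Derlog}(\mathcal{A}')\longrightarrow \mathrm{Derlog}(\mathcal{A})\stackrel{\rho}{\longrightarrow}\mathrm{Derlog}(\mathcal{A}''),$$
where $\rho(\theta)$ is the derivation induced on $R/\langle l\rangle$; the kernel is identified with $l\cdot\mathrm{Derlog}(\mathcal{A}')$ via the description $\mathrm{Derlog}(\mathcal{B})=\{\theta\mid l_j\mid\theta(l_j)\ \forall j\}$. Suppose, for contradiction, that $\mathrm{Syz}(J_f)$ had a nonzero element $\theta$ of degree $d\le r(\mathcal{A}')$. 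Since $d\le r(\mathcal{A}')<r(\mathcal{A}'')$, the engine applied to $\mathcal{A}''$ forces $\rho(\theta)=h\,\theta_{\bar E}$ for some $h$. Lifting $h$ to $\tilde h\in R$ and subtracting $\tilde h\,\theta_E$ (which also maps to $h\,\theta_{\bar E}$) places $\theta-\tilde h\,\theta_E$ in the kernel, so $\theta-\tilde h\,\theta_E=l\,\eta$ with $\eta\in\mathrm{Derlog}(\mathcal{A}')$ of degree $d-1<r(\mathcal{A}')$; by the engine again $\eta=p\,\theta_E$. Then $\theta=(\tilde h+lp)\,\theta_E$ is a multiple of Euler, and $\theta\in\mathrm{Syz}(J_f)$ forces $(\tilde h+lp)mf=\theta(f)=0$, hence $\theta=0$, a contradiction. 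Therefore $r(\mathcal{A})\ge r(\mathcal{A}')+1$, and combined with the upper bound $r(\mathcal{A})=r(\mathcal{A}')+1$.

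The main obstacle I anticipate is the bookkeeping around the restriction, not the core idea. Three points need care: (i) establishing the exact sequence, in particular that $\rho$ lands in $\mathrm{Derlog}(\mathcal{A}'')$ even when distinct forms of $\mathcal{A}'$ restrict to proportional forms, making $\mathcal{A}''$ nonreduced; (ii) the splitting $\mathrm{Derlog}(\mathcal{B})=\mathrm{Syz}(J)\oplus R\theta_E$, used for $\mathcal{A}'$ and $\mathcal{A}''$, requires the characteristic not to divide $|\mathcal{A}'|=m-1$ and $|\mathcal{A}''|$ as well, so in positive characteristic one must either restrict to the generic situation, where the restriction is again reduced and generic, or impose the corresponding hypotheses; and (iii) confirming that deletion preserves rank $n$, which holds in the range $m\ge n+1$ but can fail for a coloop. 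None of these is deep, but they are where the argument must be stated carefully.
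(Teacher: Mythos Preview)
The paper does not supply its own proof of this theorem: it is quoted verbatim as \cite[Theorem 2.14]{TaDiSti} and used as a black box in the induction of Proposition~\ref{exact_indeg_Jacobian_generic}. So there is no in-house argument to compare against; your proposal is effectively a reconstruction of the Abe--Dimca--Sticlaru result.

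That said, your argument is sound and is essentially the standard one. The upper bound via $\theta=l\eta-\tfrac{\eta(l)}{m}\theta_E$ is exactly the ``multiply by $l$ and correct by Euler'' trick, and your non-vanishing check is fine. The lower bound via the Ziegler/Terao restriction sequence $0\to l\cdot\mathrm{Derlog}(\mathcal{A}')\to\mathrm{Derlog}(\mathcal{A})\to\mathrm{Derlog}(\mathcal{A}'')$ together with the splitting (\ref{splitting}) in each piece is precisely how \cite{TaDiSti} proceeds. Your degree bookkeeping is correct: from $d\le r(\mathcal{A}')<r(\mathcal{A}'')$ you get both $\rho(\theta)$ Euler in $\mathcal{A}''$ and, after passing to the kernel, $\eta$ Euler in $\mathcal{A}'$ since $\deg\eta=d-1<r(\mathcal{A}')$.

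The caveats you list are the right ones. Point (i) is genuine: $\mathcal{A}^H$ is defined set-theoretically, so when several $l_j$ restrict to proportional forms one must check $\rho$ still lands in $\mathrm{Derlog}(\mathcal{A}'')$; this is classical (Orlik--Terao). Point (ii) is also genuine, and is exactly why the present paper only invokes the theorem in the generic situation, where the restriction is again a reduced generic arrangement of size $m-1$ in $n-1$ variables and the required non-divisibility by ${\rm char}(\K)$ follows from the blanket hypothesis ${\rm char}(\K)\nmid m$ together with the specific sizes that arise in the induction. Point (iii) is irrelevant for the application here since the paper uses the theorem only for $m\ge n+2$, and in the generic case no $l_i$ is a coloop; but you are right that the statement as written needs rank preservation for $r(\mathcal{A}')$ to be meaningful.
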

Though \cite{TaDiSti} assumes ${\rm char}(\K)=0$ throughout, it is rather clear that the arguments only requires that ${\rm char}(\K)$ does not divide $m=\deg(f)$.

It is proved in \cite[Corollary 4.4.3]{RoseTerao} that, in the generic case with $m\geq n+1$, the syzygy module of $J_f$ is equigenerated in degree $m-n+1$.
Of course, this comes out as a conflagration of introducing various chain complexes, none of which is trivial.
As it turns, one can walk a good mile just by knowing that the initial degree of $\syz(J_f)$ is $m-n+1$.
For this reason, and also for the sake of completeness, we state and prove the following:

\begin{prop}\label{exact_indeg_Jacobian_generic}
	Let $\mathcal{A}$ denote a generic central hyperplane arrangement of $m$ linear forms in affine space $\mathbb{K}^n$, satisfying $m\geq n+1$. Then $r(\mathcal{A})=m-n+1.$
\end{prop}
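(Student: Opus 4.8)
The plan is to prove the formula by a double induction, using the addition–deletion result \thmref{Addition-deletion} as the engine. I fix the principal induction on the rank $n$ and, within each rank, a secondary induction on the number $m$ of hyperplanes. The recursion is driven by choosing a hyperplane $H=V(l_m)$ and comparing $r(\mathcal{A}\setminus\{H\})$ with $r(\mathcal{A}^H)$: since both the deletion $\mathcal{A}\setminus\{H\}$ (a generic arrangement of $m-1$ forms in $\mathbb{K}^n$) and the restriction $\mathcal{A}^H$ (a generic arrangement of $m-1$ forms in $\mathbb{K}^{n-1}$) remain generic, as recorded in the paragraph above, the inductive hypotheses apply to each.

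Two base computations are needed. First, when $n=2$, genericity just means the $l_i$ are pairwise non-proportional, so $f$ is squarefree; the Euler relation $x f_x + y f_y = m f$ together with squarefreeness forces $f_x$ and $f_y$ to be coprime, whence $\mathrm{Syz}(J_f)$ is the free rank-one module generated by the Koszul syzygy $(f_y,-f_x)$, which sits in degree $m-1=m-n+1$. Second, the auxiliary case $m=n$: here the $n$ forms are linearly independent, so after a linear change of variables $f=x_1\cdots x_n$ and $f_{x_i}=\prod_{j\neq i}x_j$; these are distinct monomials (so there is no constant syzygy) but satisfy the degree-one syzygies $x_i f_{x_i}-x_j f_{x_j}=0$, giving $r=1=m-n+1$ in this case as well.

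For the inductive step I compute the two quantities feeding \thmref{Addition-deletion}. The deletion has $m-1$ forms in $\mathbb{K}^n$: if $m\geq n+2$ the secondary hypothesis gives $r(\mathcal{A}\setminus\{H\})=(m-1)-n+1=m-n$, and if $m=n+1$ the auxiliary $m=n$ computation gives the same value $m-n=1$. The restriction has $m-1\geq n=(n-1)+1$ forms in $\mathbb{K}^{n-1}$, so the principal hypothesis in rank $n-1$ gives $r(\mathcal{A}^H)=(m-1)-(n-1)+1=m-n+1$. Thus $r(\mathcal{A}\setminus\{H\})=m-n<m-n+1=r(\mathcal{A}^H)$, the hypothesis of \thmref{Addition-deletion} is met, and we conclude $r(\mathcal{A})=r(\mathcal{A}\setminus\{H\})+1=m-n+1$.

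The only genuine obstacle is organizing the base cases so that the two inductions mesh: the deletion lowers $m$ and, at $m=n+1$, lands on the $m=n$ auxiliary rather than on the stated range $m\geq n+1$, while the restriction lowers $n$ and must be covered by the rank-$(n-1)$ hypothesis down to its own minimal size $m-1=n$. Once the $n=2$ and $m=n$ computations are in place, and one checks that deletion and restriction keep $m-1$ distinct forms in the required generic position (distinctness of the restricted forms needs $l_i,l_j,l_k$ independent, which holds whenever $n\geq 3$, i.e.\ in every case where the restriction is actually invoked), verifying the strict inequality $r(\mathcal{A}\setminus\{H\})<r(\mathcal{A}^H)$ is immediate and the addition–deletion theorem does the rest.
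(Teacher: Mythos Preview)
Your proof is correct and shares the paper's overall strategy---induction driven by the addition--deletion theorem (\thmref{Addition-deletion}) applied to a deletion/restriction pair---but your organization of the base cases is genuinely different and, in fact, simpler.

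The paper takes as its sole base case the diagonal $m=n+1$ (for arbitrary $n$) and establishes it by a direct computation: after normalizing to $f=x_1\cdots x_n\ell$, it proves in two claims that the partial derivatives admit neither a constant nor a linear syzygy, with several subcases to rule out. This occupies the bulk of the paper's argument. You instead anchor the double induction on the two much easier facts $r(\mathcal A)=m-1$ when $n=2$ (Koszul syzygy of a regular sequence) and $r(\mathcal A)=1$ in the auxiliary case $m=n$ (Boolean arrangement), and then let the addition--deletion recursion absorb the $m=n+1$ case for $n\ge 3$. This is legitimate: when $m=n+1$ the deletion lands on your auxiliary $m=n$ computation while the restriction lands on a rank-$(n-1)$ arrangement of size $n=(n-1)+1$, covered by the outer inductive hypothesis (ultimately bottoming out at $n=2$). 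Your remark that the restriction retains $m-1$ \emph{distinct} hyperplanes precisely because any three of the $l_i$ are independent when $n\ge 3$ is the only point where a careless reader might slip, and you flag it.

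What each approach buys: the paper's base case is self-contained (no need to step outside the stated range $m\ge n+1$), at the cost of a page of case analysis; your scheme trades that for the harmless auxiliary $m=n$ and a crisper recursion.
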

\demo
By the discussions in the Appendix, one can assume that the arrangement is now $\{x_1,\ldots,x_n,l_1,\ldots,l_{m-n}\}\subset R:=\K[x_1,\ldots,x_n]$, for suitable linear forms $l_1,\ldots,l_{m-n}$. In particular, the updated defining polynomial is  $f:=x_1\cdots x_n l_1 \cdots l_{m-n}$.

\medskip

\noindent {\em Starting Case: $m=n+1$.}  Here, say, $f=x_1\cdots x_n l,$ with $l=\alpha_1 x_1 + \cdots + \alpha_n x_n$, where $\alpha_1,\ldots,\alpha_n\in\mathbb K\setminus\{0\}$ since the arrangement is generic.
As easily seen, one has
\begin{equation}\label{partial_initial_case}
f_{x_i}=x_1\cdots \widehat{x_i}\cdots x_n (\alpha_ix_i+l).
\end{equation}
Therefore, two indices $i\neq j$ yield an obvious reduced Koszul relation of degree $2$.
Thus, it suffices to show that the partial derivatives have no syzygy of degree $\leq 1$.

\smallskip

{\sc Claim 1.} $\{f_{x_1},\ldots, f_{x_n}\}$ is linearly independent over $\mathbb{K}$.

This is a far more general result, but in the present case it follows simply because if one has scalars $\lambda_i\in\mathbb{K}$, not all zero, such that
 $$\lambda_1(x_2\cdots x_nl+x_1\cdots x_n \alpha_1)+\cdots +\lambda_n(x_1\cdots x_{n-1}l +x_1\cdots x_n \alpha_n)=0$$
 then a scalar $\lambda\in \mathbb{K}$ comes out such that
 $$\lambda\, x_1\cdots x_n= \left(\sum_i \lambda_i \,x_1\cdots \widehat{x_i}\cdots x_n\right)\, l.$$
Now, either $\lambda=0$, in which case the parenthetical content would have to vanish; since some $\lambda_i\neq 0$ by assumption, this would say that the partial derivatives of $x_1\cdots x_n$ are linearly dependent, an absurd since, as is well-known, they are even algebraically independent over $\mathbb{K}$.
We are left with the option that $\lambda\neq 0$, and hence $x_i$ would have to be a factor of $l$, because $\lambda_i\neq 0$ so $x_i$ cannot divide the parenthetical summation.

So much for this claim.

{\smallskip

{\sc Claim 2.} $\{f_{x_1},\ldots, f_{x_n}\}$ admits no linear syzygy.

Suppose that $L_1,\ldots,L_n \in R$ are linear forms, not all zero, such that $$L_1f_{x_1}+\cdots +L_nf_{x_n}=0.$$

	Then,
	\begin{equation}\label{basic_relation}
	(L_1x_2\cdots x_n + \cdots +L_nx_1\cdots x_{n-1})l=-x_1\cdots x_{n}(\alpha_1 L_1+\cdots +\alpha_nL_n).
	\end{equation}
	
There are two cases to have in mind:

{\em First case}: $\alpha_1 L_1+\cdots +\alpha_nL_n=0$.

This implies that $(L_1,\ldots,L_n)^t$ is a linear syzygy of the Boole arrangement of equation $x_1\cdots x_n$. But the syzygies of the latter are well-known to be given by the Hilbert--Burch matrix
$$\left[\begin{matrix}
x_1 & 0 & 0 & \cdots\\
-x_2 & x_2 & 0 & \cdots\\
0  & -x_3 & x_3 & \cdots\\
0 & 0 & -x_4 & \cdots\\
\vdots & \vdots & \vdots &\cdots
\end{matrix}
\right]
$$
Writing up this relation yields
$$L_1=\beta_1x_1, L_2=-(\beta_1+\beta_2)x_2,\ldots, L_i=\pm (\beta_{i-1}+\beta_i)x_i, \ldots, L_n=-\beta_{n-1}x_n,$$
which, when plugged back into $\alpha_1 L_1+\cdots +\alpha_nL_n=0$, implies a relation
$$\alpha_1\beta_1x_1 -\alpha_2(\beta_1+\beta_2)x_2+\cdots + \alpha_{n-1}(\beta_{n-1}+\beta_{n-1})x_{n-1}-\alpha_n\beta_{n-1}x_n=0.$$
Since some $L_i\neq 0$, the corresponding $\beta$-like coefficient is nonzero. But $\alpha_i\neq 0$ for any $i$. This would give a $\mathbb{K}$-linear dependence relation for the variables.

\smallskip

{\em Second case} : $L:=\alpha_1 L_1+\cdots +\alpha_nL_n\neq 0$.

In this case, $L$ must divide one of the factors on the left side of the relation (\ref{basic_relation}).
If $L$ divides the parenthetical summation, after canceling, one gets an equality $P\,l=x_1\cdots x_n$, for some polynomial $P\in R$.
Since no $x_i$ divides $l$ by the generic shape of $l$, this is an absurd because of the UFD property.
Therefore, we must have, say, $L=\beta l$, for some nonzero $\beta\in \mathbb{K}$.
Moreover, upon substitution in (\ref{basic_relation} and cancellation, one obtains
$$L_1x_2\cdots x_n + \cdots +L_nx_1\cdots x_{n-1}=-\beta x_1\cdots x_{n}.$$
This means that $L_1,\ldots,L_n$ are the coordinates of a derivation of $R/\langle x_1\cdots x_n\rangle$, hence
 $L_i=\beta_i x_i$, for certain $\beta_i\in \mathbb{K}$. Canceling $x_1\cdots x_n$ yields $\beta_1+\cdots +\beta_n=-\beta$, which in turns yields the relation $\alpha_1\beta_1x_1+\cdots + \alpha_n\beta_nx_n=-L=-\beta l$ and hence, a syzygy
$\alpha_1(\beta+\beta_1)x_1+\cdots \alpha_n(\beta+\beta_n)x_n=0$ with scalar coefficients.
Since no $\alpha_i$ vanishes, every term $\beta+\beta_i$ must. Together with the relation $\beta_1+\cdots +\beta_n=-\beta$ this easily leads to $\beta_i=0$ for all $i$, hence  also $L_i=0$ for all $i$, as was to be shown.

\medskip

\noindent {\em Inductive Step}: $m\geq n+2$.

We will induct on $m$.
Recall that both the deletion and the restriction are still generic arrangements, and $m$ drops on either. Take both with respect to $l=x_1$.
Thus, $r(\mathcal{A}\setminus \{H\})=(m-1)-n+1=m-n$ by the inductive hypothesis.
Likewise,  the defining polynomial associated to the restriction $\mathcal{A}^H$ is $\widetilde{f}=x_2\cdots x_n \widetilde{l_1} \cdots \widetilde{l_{m-n}}$, with $\widetilde{l_i}=l_i|_{x_1=0}$, and one has  $r(\mathcal{A}^H)=(m-1)-(n-1)+1=m-n+1$ by the inductive hypothesis.
Consequently,
$r(\mathcal{A}\setminus \{H\})<r(\mathcal{A}^H).$
Then, by
 Theorem~\ref{Addition-deletion} we have $r(\mathcal{A})=r(\mathcal{A}\setminus \{H\})+1=m-n+1.$
\qed

\subsection{The Rose--Terao--Yuzvinsky theorem under an algebraic angle}\label{Yuz}

We keep the notation of the previous section, under the assumption that $m$ does not divide ${\rm char}(\K)$, a proviso we will refrain from repeating throughout.

Given the defining polynomial $f=l_1\cdots l_m\in R$ of a generic central arrangement $\mathcal{A}$, under a slight danger of confusion, we set $f_i:=f/l_i$ (i.e., $f_i$ is not the partial derivative $f_{x_i}$ as might sometimes be the chosen notation).
Consider the ideal
$$\mathbb I=\mathbb{I}(\mathcal{A}):=\langle f_1,\ldots,f_m\rangle=\langle l_2\cdots l_m,\ldots,l_1\cdots l_{m-1}\rangle\subset R,$$
generated by these $(m-1)$-fold products of the linear forms $l_1,\ldots,l_m$.
Note that this is a minimal set of generators of $\mathbb I$ (see, e.g., \cite[Lemma 3.1 (a)]{GaSiTo}).

\begin{thm}\label{saturation_and_regularity}
	Let $f=l_1l_2\cdots l_m$ as above denote the defining polynomial of a generic central arrangement in $R=\mathbb{K}[x_1,\ldots,x_n]$ with $m\geq n+1$.
	 Then:
	\begin{enumerate}
		\item[{\rm (i)}]  $[\mathbb I]_{2m-n-1}=[J_f]_{2m-n-1}$.
		\item[{\rm (ii)}] If $n\geq 3$ then$(J_f)^{\rm sat}=\mathbb I$. In particular, $J_f$ and $\mathbb{I}$ coincide locally everywhere on the punctured spectrum $\spec R\setminus \{\fm\}$.
		\item[{\rm (iii)}] ${\rm reg}(J_f)={\rm sat}(J_f)=2m-n-1$, where ${\rm sat}(I)$ denotes the saturation exponent of an ideal $I\subset R$..
		\end{enumerate}
\end{thm}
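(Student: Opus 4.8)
The plan is to treat the three statements as a single package resting on two facts already available: the inclusion $J_f\subseteq\mathbb{I}$, with both ideals generated in the single degree $m-1$, and the explicit shape of the minimal free resolution of $R/J_f$. For the inclusion, writing $l_j=\sum_i c_{ji}x_i$ gives $f_{x_i}=\sum_j c_{ji}\,(f/l_j)=\sum_j c_{ji}f_j$, so each partial is a fixed $\K$-linear combination of the generators $f_1,\dots,f_m$ of $\mathbb{I}$; thus $J_f\subseteq\mathbb{I}$, cut out inside $\mathbb{I}$ by the constant $n\times m$ coefficient matrix, whose maximal minors are nonzero by genericity. Granting from the previous subsection that $\depth(R/J_f)=0$ (hence $\mathrm{pd}(R/J_f)=n$) and that $\syz(J_f)$ is equigenerated in degree $m-n+1$ with a linear resolution thereafter, the minimal free resolution of $R/J_f$ is $0\to R^{b_n}(-(2m-2))\to\cdots\to R^{b_2}(-(2m-n))\to R^n(-(m-1))\to R\to R/J_f\to 0$.

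For (iii) I would read $\mathrm{reg}(R/J_f)=\max_k(t_k-k)=2m-n-2$ directly off this resolution, where $t_k$ is the top twist at homological step $k$ (so $t_0=0$, $t_1=m-1$, and $t_k=2m-n+k-2$ for $k\ge 2$); hence ${\rm reg}(J_f)=2m-n-1$. For the saturation exponent I would pass through graded local duality: the tail $F_n=R^{b_n}(-(2m-2))$ shows $\Ext^n_R(R/J_f,R(-n))$ is nonzero in its minimal degree $-(2m-n-2)$, so its graded dual $H^0_{\fm}(R/J_f)$ has top nonzero degree exactly $2m-n-2$. Since $H^0_{\fm}(R/J_f)=(J_f)^{\rm sat}/J_f$, this gives ${\rm sat}(J_f)=2m-n-1$.

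For (ii), assume $n\ge 3$ and put $Q:=\mathbb{I}/J_f$. First I would show $Q$ has finite length by a local computation: at a minimal prime $P=\langle l_i,l_j\rangle$ of $\mathbb{I}$, genericity (any three of the forms are independent once $n\ge 3$) forces every other $l_k$ to be a unit in $R_P$, whence both $\mathbb{I}_P$ and $(J_f)_P$ equal $P_P$; away from $V(\mathbb{I})$ both localize to the unit ideal because $\sqrt{J_f}=\sqrt{\mathbb{I}}$. This already yields the punctured-spectrum assertion, and shows $\mathrm{Supp}(Q)\subseteq\{\fm\}$, so $Q=H^0_{\fm}(Q)$. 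Feeding $0\to Q\to R/J_f\to R/\mathbb{I}\to 0$ into local cohomology and using that $R/\mathbb{I}$ is Cohen--Macaulay of dimension $n-2\ge 1$ (hence $H^0_{\fm}(R/\mathbb{I})=0$) makes $Q\cong H^0_{\fm}(R/J_f)=(J_f)^{\rm sat}/J_f$ as submodules of $R/J_f$; therefore $\mathbb{I}=(J_f)^{\rm sat}$.

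Statement (i) then drops out for $n\ge 3$, since $[\mathbb{I}/J_f]_{2m-n-1}=H^0_{\fm}(R/J_f)_{2m-n-1}=0$ by the top-degree computation above. The case $n=2$ I would dispatch directly: there $R/J_f$ and $R/\mathbb{I}$ are both Artinian, and $\mathrm{reg}(R/J_f)=2m-4$ forces $[J_f]_{2m-3}=R_{2m-3}=[\mathbb{I}]_{2m-3}$ once one knows $\mathrm{reg}(R/\mathbb{I})\le 2m-4$. The main obstacle I anticipate is twofold: making the local-duality bookkeeping airtight so that the top degree of $H^0_{\fm}(R/J_f)$ is \emph{exactly} $2m-n-2$ (equivalently, that the regularity is attained at the very end of the resolution, which is what separates ${\rm sat}(J_f)=2m-n-1$ from the mere bound ${\rm sat}\le{\rm reg}$), and importing the Cohen--Macaulayness of $R/\mathbb{I}$ for generic arrangements, which is precisely what both kills $H^0_{\fm}(R/\mathbb{I})$ and secures the reverse inclusion $(J_f)^{\rm sat}\subseteq\mathbb{I}$.
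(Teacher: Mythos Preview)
Your plan is circular relative to the paper's logical order. The only fact available from ``the previous subsection'' is Proposition~\ref{exact_indeg_Jacobian_generic}: the \emph{initial} degree of $\syz(J_f)$ equals $m-n+1$. You additionally assume that $\depth(R/J_f)=0$ and that the entire resolution of $R/J_f$ is linear after the first step, but in the paper these are the content of Theorem~\ref{Yuz}, which is deduced \emph{from} the present theorem (item (ii) gives $J_f^{\rm sat}=\mathbb{I}\ne J_f$, whence $\fm\in{\rm Ass}(R/J_f)$; item (iii) then pins down the shifts). Thus your reading of $\mathrm{reg}(J_f)$ off the resolution and your local-duality computation of the top degree of $H^0_\fm(R/J_f)$ both rest on conclusions you are not yet entitled to. The paper proceeds in the opposite direction: it establishes (i) by an explicit induction on $m$, first showing that $\fm^{m-n}$ coincides with the ideal of $(m-n)$-fold products of $l_1,\ldots,l_m$, and then that every form $(l_{i_1}\cdots l_{i_{m-n}})^2\, l_{j_1}\cdots l_{j_{n-1}}$ already lies in $J_f$; item (ii) then follows from (i) together with the perfection of $\mathbb{I}$; and (iii) is obtained from (ii), the equality ${\rm reg}(\mathbb{I})=m-1$, and the known initial syzygy degree. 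Only afterwards are depth and the resolution derived.

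Independently of the circularity, your argument for (ii) has a gap. To conclude $\mathrm{Supp}(\mathbb{I}/J_f)\subseteq\{\fm\}$ you localize only at the minimal primes $\langle l_i,l_j\rangle$ of $\mathbb{I}$ and at primes outside $V(\mathbb{I})$; this skips all primes $\wp\supset\mathbb{I}$ of height $3,\ldots,n-1$, which certainly occur once $n\ge 4$ (for instance $\wp=\langle l_i,l_j,l_k\rangle$). The desired local equality $(J_f)_\wp=\mathbb{I}_\wp$ does in fact hold at such $\wp$ as well --- if $\wp$ contains exactly $l_{i_1},\ldots,l_{i_k}$ with $k\le n-1$, genericity makes these forms $\K$-independent and the remaining $l_j$ units in $R_\wp$, so the rank-$k$ coefficient submatrix gives surjectivity by Nakayama --- but as written your support claim is not established.
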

\begin{proof} Let $\mathbb I:=\langle l_2\cdots l_m,\ldots,l_1\cdots l_{m-1}\rangle$ be the ideal generated by all $(m-1)$-fold products of the linear forms $l_1,\ldots,l_m$. Clearly, $\mathbb I$ has height $2$ and, as one easily verifies, $J_f\subset \mathbb I$.
	Therefore, $J_f$ has height at most $2$ and, since $f$ is a reduced form,  the height is exactly $2$ (actually, it can be seen that $J_f$ and $\mathbb I$ have the same minimal primes, all of codimension $2$).

(i)
One inclusion being obvious, it suffices to show that $[\mathbb I]_{2m-n-1}\subset [J_f]_{2m-n-1}$.
Now, let $P\in \mathbb I$, with $\deg(P)=2m-n-1$. Write
\begin{equation}\label{P-writing}
P=(l_2\cdots l_m)P_1+\cdots +(l_1\cdots l_{m-1})P_m,
\end{equation}
for certain forms $P_i\in [\fm^{m-n}]_{m-n}$,  where $\frak m=(x_1,\ldots,x_n)$.

\smallskip

{\sc Claim 1.} ($\mathcal{A}$ generic) \,Let $\mathbb{I}_{m-n}(\mathcal{A})$ denote the ideal of $R$ generated by all $(m-n)$-fold products of the given linear forms $l_1,\ldots,l_m$. Then $\mathbb{I}_{m-n}(\mathcal{A})=\frak m^{m-n}$.

The proof is by induction on $m$. It is quite obvious for $m=n+1$. Thus, assume that $m\geq n+2$ and, for every $1\leq i\leq m$, consider the deletion $\mathcal{A}_i:=\mathcal{A}\setminus l_i$.
By the inductive hypothesis, $\mathbb{I}_{m-1-n}(\mathcal{A}_i)=\fm^{m-1-n}$, hence
$$l_i\,\fm^{m-1-n}= l_i\, \mathbb{I}_{m-1-n}(\mathcal{A}_i)\subset \mathbb{I}_{m-n}(\mathcal{A}),$$
for all $i$. But since $\{l_1,\ldots,l_n\}$ generates $\fm$ then $\fm^{m-n}= \langle l_1,\ldots,l_n\rangle \fm^{m-1-n}$, so we are through..

(A more conceptual argument follows by noting that, since the linear forms are generic, the linear code dual to $l_1,\ldots,l_m$ has minimum distance $m-n+1$, hence the assertion follows  from \cite[Theorem 3.1]{To1}).

Thus, for each $i=1,\ldots,m$, write
$$P_i=\sum_{1\leq j_1<\cdots<j_{m-n}\leq m} c^i_{j_1,\ldots,j_{m-n}}l_{j_1}\cdots l_{j_{m-n}},$$
for certain coefficients $c^i_{j_1,\ldots,j_{m-n}}\in\K$.

Now, in the above summation for fixed $i$, if a term effectively involves the form $l_i$ then its product with the corresponding product $l_1\cdots \hat{l_i}\cdots l_m$ up in (\ref{P-writing}) is a multiple of $f$, hence belongs to $J_f$ by Euler's relation.
We have to deal with case where a term does not effectively involve $l_i$.
Because of the obvious symmetry of the problem, there is no loss of generality in taking $i=m$ and assuming that $P_m$ has a term not multiple of $l_m$.

We then show:

{\sc Claim 2.} The $(2m-n-1)$-form $\Delta:=(l_1\cdots l_{m-n})^2l_{m-n+1}\cdots l_{m-1}$ belongs to $J_f$.

The proof is again by induction on $m\geq n+1$.

If $m=n+1$,  $\Delta=l_1^2l_2\cdots l_n\in J_f$.
As in the proof of Proposition~\ref{exact_indeg_Jacobian_generic}, up to a change of coordinates we can assume that $l_i=x_i$ for $ i=1,\ldots,n$ and  $l_{n+1}=\ell=\alpha_1x_1+\cdots+\alpha_nx_n$, with $\alpha_i\in\mathbb K\setminus\{0\}$.
Thus, $f=x_1\cdots x_n\ell$ and
 $f_{x_1}=x_2\cdots x_n\ell+x_1x_2\cdots x_n\alpha_1$ by (\ref{partial_initial_case}).
Clearly then, $$x_1^2x_2\cdots x_n=\frac{1}{\alpha_1}(x_1f_{x_1}-f)\in J_f.$$

 Let now  $m\geq n+2$. Write
$$\Delta=(l_1)^2\underbrace{(l_2\cdots l_{m-n})^2l_{m-n+1}\cdots l_{m-1}}_{\Delta'}.$$
By the inductive hypothesis, $\Delta'\in J_{f'}$, where $f'=l_2\cdots l_m$.
Write $\Delta'=Q_1 f'_{x_1}+\cdots+ Q_n f'_{x_n}$, for certain $Q_i\in R$. But, for $i=1,\ldots,n$, one has $f_{x_i}=(l_1)_{x_i}f'+l_1 f'_{x_i},$ and therefore $(l_1)^2 f'_{x_i}\in J_f$ for $i=1,\ldots,n$. But then, $\Delta=l_1^2\Delta'\in J_f,$ as was to be shown.

\medskip

(ii) This follows from (i) since any form in $[\fm^{m-n}]_{m-n}$ multiplies any $f/l_i$ into $[\mathbb I]_{2m-n-1}$, thus giving
 $\mathbb I\subset (J_f)^{\rm sat}$. The reverse inclusion is obvious since $\mathbb I$ is saturated, being a codimension two perfect ideal and $n\geq 3$.

 \medskip

(iii) As mentioned, ${\rm sat}(I)$ denotes  the {\em saturation exponent} (or the {\rm satiety}) of an ideal $I\subset R$.

{\sc Claim.} ${\rm reg}(J_f)={\rm sat}(J_f).$

It is well known that
$${\rm reg}(J_f)=\max\{{\rm sat}(J_f), {\rm reg}(J_f^{\rm sat})\}.$$
 By Proposition~\ref{exact_indeg_Jacobian_generic}, at least ${\rm reg}(J_f)\geq 2m-n-1>m-1$, since $m>n$.
On the other hand, assume that $n\geq 3$. Then  ${\rm reg}(J_f^{\rm sat})={\rm reg}(\mathbb I)$ by (ii).
But, since $\mathbb{I}$ is a linearly presented codimension two perfect ideal, ${\rm reg}(\mathbb I)=m-1$.
This implies the claim if $n\geq 3$.
If $n=2$ then $J_f$ is $\fm$-primary, hence $J_f^{\rm sat}=(1)$, whose regularity is zero by convention. Thus, we are home in this case too.

\smallskip

Suppose ${\rm sat}(J_f)\geq 2m-n$. Then, by \cite[Proposition 2.1]{BeGi}, there exists a minimal generator $h\in J_f:\fm\setminus J_f$, of degree $d:={\rm sat}(J_f)-1$. Thus, $d\geq 2m-n-1$.

But $(J_f:\fm)\subset (J_f)^{\rm sat}=\mathbb I$, so $h=(l_2\cdots l_m)P_1+...+(l_1\cdots l_{m-1})P_m$, for some $P_j\in R$ of degree $d-m+1\geq m-n$. Thus, each $P_j$ is a polynomial combination of monomials of degree $m-n$. An analogous argument to the proof of  Claim 2 in the item (i) yields $h\in J_f$, contradicting our choice of $h$.
Therefore, ${\rm sat}(J_f)\leq 2m-n-1$, hence ${\rm reg}(J_f)=2m-n-1.$
\end{proof}

\medskip

Let $f:=l_1\cdots l_m\in R:=\mathbb K[x_1,\ldots,x_n]$ be the defining polynomial of a central hyperplane arrangement $\mathcal{A}$ in $\mathbb{K}^n$. The module of {\em logarithmic derivations} associated to $f$ is defined as $\mathrm{Derlog}(\mathcal{A}):=\{\theta\in {\rm Der}(R)| \theta (f)\subset \langle f \rangle\}.$
If ${\rm char}(\mathbb K)$ does not divide $m=\mathrm{deg}(f)$, upon the identification ${\rm Der}(R)=R^n$ this module has  a splitting structure
\begin{equation}\label{splitting}
\mathrm{Derlog}(\mathcal{A})= \mathrm{Syz}(J_f)\oplus R\theta_E,
\end{equation}
where $J_f\subset R$ denotes the Jacobian ideal of $f$ and $\theta_E$ is the Euler derivation.
Here, as discussed earlier, $\mathrm{Syz}(J_f)\subset R^n$ denotes the module of first syzygies of the partial derivatives of $f$.

As a consequence of Theorem~\ref{saturation_and_regularity}, we now retrieve part of the findings of Rose--Terao and Yuzvinsky.

\begin{thm} \label{Yuz}
	Let $f=l_1l_2\cdots l_m$ as above denote the defining polynomial of a generic central arrangement in $R=\mathbb{K}[x_1,\ldots,x_n]$ with $m\geq n+1$.
 Then
\begin{enumerate}
	\item[{\rm (i)}] ${\rm depth}(R/J_f)=0$.
	\item[{\rm (ii)}]	The minimal graded free resolution of $J_f$ has the following shifts
	{\small
	$$0\rightarrow R^{b_n}(-(2m-1))\rightarrow R^{b_{n-1}}(-(2m-2))\rightarrow\cdots\rightarrow R^{b_1}(-(2m-n))\rightarrow R^n(-(m-1)).$$
}
\end{enumerate}
\end{thm}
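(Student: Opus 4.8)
The plan is to deduce both statements from the structural information already assembled in Theorem~\ref{saturation_and_regularity}, using the standard dictionary between depth, local cohomology, and saturation. First I would treat part (i). Since $\fm = \langle x_1,\ldots,x_n\rangle$ and $R/J_f$ is a graded module, $\depth(R/J_f)=0$ is equivalent to the assertion that $\fm \in \ass(R/J_f)$, i.e. that $J_f$ is \emph{not} saturated, so that $(J_f)^{\rm sat}\neq J_f$. By Theorem~\ref{saturation_and_regularity}(ii) we know $(J_f)^{\rm sat}=\mathbb{I}$ when $n\geq 3$, so it suffices to observe that the inclusion $J_f\subsetneq \mathbb{I}$ is strict. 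This strictness follows from the regularity/saturation count in Theorem~\ref{saturation_and_regularity}(iii): since ${\rm sat}(J_f)=2m-n-1>0$, the ideal $J_f$ genuinely differs from its saturation $\mathbb{I}$, and hence $H^0_\fm(R/J_f)\neq 0$, giving $\depth(R/J_f)=0$. I would phrase this cleanly via the exact sequence $0\to J_f\to \mathbb{I}\to \mathbb{I}/J_f\to 0$ together with $H^0_\fm(R/J_f)\cong \mathbb{I}/J_f$, the latter being the finite-length module detected by the positive saturation exponent.

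Next I would establish the shape of the resolution in part (ii). The key numerical inputs are: the generators of $J_f$ sit in degree $m-1$ (the partial derivatives of $f$, all of degree $m-1$); the syzygy module $\syz(J_f)$ is equigenerated in degree $m-n+1$ by \cite[Corollary 4.4.3]{RoseTerao}, so the first syzygies of $J_f$ occur in degree $(m-1)+(m-n+1)=2m-n$; and ${\rm reg}(J_f)=2m-n-1$ by Theorem~\ref{saturation_and_regularity}(iii). The plan is to argue that the resolution is \emph{linear} after the first syzygy step, i.e. each subsequent syzygy module is generated one degree higher than the previous. This is exactly the content of a $2$-step linear resolution: once one knows both the initial degree of the first syzygies and the regularity, a standard regularity-counting argument forces every intermediate Betti number to live in the minimal possible degree. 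Concretely, the last free module $R^{b_n}$ must sit in degree $2m-1$: since the projective dimension is $n-1$ (because $\depth(R/J_f)=0$ forces ${\rm pd}(R/J_f)=n$ by Auslander--Buchsbaum, hence ${\rm pd}(J_f)=n-1$), and ${\rm reg}(J_f)=2m-n-1$ means the top shift is ${\rm reg}(J_f)+{\rm pd}(J_f)=(2m-n-1)+(n-1)\cdot 1$ if linear, matching $2m-1$ only when all steps are linear. I would lay out the shifts $-(m-1), -(2m-n), -(2m-n+1),\ldots,-(2m-1)$ and verify the arithmetic progression closes correctly.

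The main obstacle I anticipate is justifying that the resolution is genuinely linear in all its internal steps, rather than merely matching at the two endpoints. Knowing the initial degree of the syzygies ($2m-n$) and the final shift ($2m-1$) pins down the two ends, but the intermediate Betti numbers $b_2,\ldots,b_{n-1}$ could a priori involve higher-degree generators unless linearity is enforced. The cleanest route is to invoke that $\mathbb{I}=(J_f)^{\rm sat}$ is a linearly presented codimension-two perfect ideal (its Eagon--Northcott/Hilbert--Burch resolution is linear), and then compare the resolution of $J_f$ with that of $\mathbb{I}$ via the exact sequence relating them through the finite-length cokernel $H^0_\fm(R/J_f)$. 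Because the two ideals agree on the punctured spectrum (Theorem~\ref{saturation_and_regularity}(ii)), their resolutions differ only in a controlled way governed by that finite-length module, which lives in a single degree; tracking this comparison degree-by-degree is where the real work lies, and I would expect to need a careful mapping-cone or long-exact-sequence bookkeeping to confirm that no non-linear strands appear. An alternative, more self-contained route is a direct regularity argument: since ${\rm reg}(J_f)=2m-n-1$ bounds all Betti numbers' internal degrees, and the socle/top degree is fixed at $2m-1$, the equigeneration of $\syz(J_f)$ leaves no room for the resolution to be anything other than linear in each step. I would present whichever of these two arguments turns out to require the least additional machinery, likely the regularity-counting one given that the regularity is already in hand.
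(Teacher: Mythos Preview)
Your proposal is correct and follows the paper's route closely. For (i), both you and the paper deduce $\depth(R/J_f)=0$ from $J_f\neq J_f^{\rm sat}$: you via ${\rm sat}(J_f)=2m-n-1>0$ directly, the paper via a primary-decomposition contradiction (if $\depth>0$ then saturating changes nothing, forcing $J_f=\mathbb{I}$ and hence $\mu(J_f)\geq n+1$). For (ii), the regularity-counting argument you settle on at the end is exactly the paper's proof, with one caveat worth flagging: you invoke \cite[Corollary 4.4.3]{RoseTerao} for the \emph{equigeneration} of $\syz(J_f)$, but only the \emph{initial} degree $r(\mathcal{A})=m-n+1$ is needed, and the paper supplies this independently in Proposition~\ref{exact_indeg_Jacobian_generic}. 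Citing Rose--Terao here is harmless logically but undercuts the section's stated purpose of recovering their theorem by other means. Finally, the ``main obstacle'' you anticipate dissolves: once the minimal first-syzygy shift equals ${\rm reg}(J_f)+1=2m-n$, the two inequalities $\alpha_k\geq\alpha_{k-1}+1$ (minimality of the resolution) and $\beta_k\leq{\rm reg}(J_f)+k$ (definition of regularity) squeeze every step to be pure of shift $2m-n-1+k$, with no room for stray strands; this is the standard argument the paper cites from \cite{Ooishi}, and no mapping-cone comparison with $\mathbb{I}$ is required.
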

\begin{proof}
(i)
The assertion is trivial for $n=2$ since $J_f$ is $\fm$-primary.
Thus, assume that $n\geq 3$ and recall that $J_f$ has height two.

Take a reduced primary decomposition $J = J^{\rm un} \cap \bigcap_i \mathcal{N}_i,$ where $J^{\rm un}$ is the unmixed part of $J$ and each $\mathcal{N}_i$ is a primary component of codimension at least $3$.
	
If we assume that $\depth (R/J) > 0$, then $3\leq \codim \mathcal{N}_i<n$, for every $i$.
Therefore, by Theorem~\ref{saturation_and_regularity} (ii), $\mathbb{I} = J : \fm^{\infty} =  (J^{\rm un} : \fm^{\infty} )\cap \bigcap_i (\mathcal{N}_i : \fm^{\infty}) = J^{\rm un} \cap \bigcap_i \mathcal{N}_i$.
Therefore,  $J=\mathbb{I}$, which is nonsense since the minimal number of generators of $\mathbb I$ is at leat $n+1$.

\smallskip
(ii)
By definition, ${\rm reg}(J_f)$ is the maximum value $\beta_k-k$, where $\beta_k$ is the maximum shift at step $k$ in the minimal graded free resolution of $J_f$.
By Proposition~\ref{exact_indeg_Jacobian_generic}, the minimum shift at step $k=1$ is $r(\mathcal A)+(m-1)=2m-n$, which equals ${\rm reg}(J_f)+1$ by Theorem~\ref{saturation_and_regularity} (iii). But the maximum shift at each step must increase by at least one and the homological dimension of $R/J_f$ over $R$ is $n$ by item (i). This implies the claimed result by a standard argument (see, e.g., \cite[Corollary 9 (2)]{Ooishi}).
\end{proof}

\begin{rem}
	(1) Proposition~\ref{exact_indeg_Jacobian_generic}, Theorem~\ref{saturation_and_regularity} and Theorem~\ref{Yuz} depend strongly on the hypotheses that $m\geq n+1$ and that the arrangement be generic.
	The failure is pretty bad if $m\leq n$, while genericity cannot be replaced by  assuming, e.g.,  that all subsets of $n$ linear forms, with one exception, are linearly independent. For that, consider the following simple example in $\K[x,y,z]$: $\mathcal{A}:\{x,y,z, x+y\}$,  a free arrangement (i.e., $J_f$ is a perfect ideal); here, the initial degree of $\syz(J_f)$ is $1<2=m-n+1$.
	
(2) In \cite{RoseTerao} the ground field $\K$ is assumed to be of characteristic zero throughout, but the authors remark at the end (Section 4.6) that the result is valid provided ${\rm char}(\K)$ does not divide  $m=\deg(f)$.
As to our approach, the vast majority of the above results crumbles down without this proviso.
A simplest example is as follows: let $R=\K[x,y,z]$, where ${\rm char}(\K)=2$ and let
$$\mathcal{A}=\{x,y,z,x+y+z\}.$$
Clearly, $\mathcal{A}$ is generic.

Calculation with \cite{M2} yields:
\begin{itemize}
		\item $f \not\subset J_f$.
	\item $J_f$ is perfect; in particular, $J_f^{\rm sat}=J_f\neq \mathbb{I}$.
	\item The free resolution of $J_f$ has the shape $0 \rar R(-4)\oplus R(-5) \rar R(-3)^3 \rar J_f\rar 0$; in particular, $J_f$ has a syzygy of degree $1< 2 = m-n+1$ -- in fact, $2$ is the degree of a minimal syzygy of maximal degree, which happens to be given by a reduced Koszul syzygy.
	\item Derlog$(\mathcal{A})$ does not split as in (\ref{splitting}); in fact, it admits many generators of degree $2$.
	\item Derlog$(\mathcal{A})$ has homological dimension $1$, while Syz$(J_f)$ is a free module.
    \item $J_f$ is not a reduction of $\mathbb{I}$ since $J_f\colon \mathbb{I}^{\infty}\neq (1)$.
\end{itemize}
We note that characteristic $2$ is not the issue, as similar examples are easily handled in any positive characteristic.
\end{rem}

\section{The Jacobian ideal as a minimal reduction}

\subsection{Approach via the Orlik--Terao algebra} Consider the  Orlik-Terao algebra of $\mathcal{A}$, which as known is graded-isomorphic to the special fiber $\mathcal{F}(\mathbb{I})$ of $\mathbb{I}$ (see \cite{GaSiTo}). Let $\mathcal{F}(\mathbb{I}) \simeq \K[T_1, \ldots, T_m]/Q$ denote a presentation.

\subsubsection{Rank plus one}

The following lemma is an adaptation to the ideal $J_f$ of a well-known fiber criterion for reductions (see, e.g., \cite{SwanHu}).
Let $a_{i,j}: =(l_j)_{x_i}:= \partial l_j /\partial x_i$ be the $x_i$-coefficient of $l_j$, and let $T_1,\ldots,T_m$ be presentation variables as above.

\begin{lem}\label{reductionOT} The following are equivalent:
	\begin{enumerate}
		\item $J_f$ is a minimal reduction of $\mathbb{I}$.
		\item The set $\{\sum_{j=1}^m a_{1,j}T_j,\ldots,\sum_{j=1}^m a_{n,j}T_j \}$ is a regular sequence on $\mathcal{F}(\mathbb{I})$.
		\item The ideal $\langle\sum_{j=1}^m a_{1,j}T_j,\ldots,\sum_{j=1}^m a_{n,j}T_j,Q\rangle\subset \K[T_1,\ldots,T_m]$ is $\langle T_1,\ldots, T_m\rangle$-primary.
	\end{enumerate}
\end{lem}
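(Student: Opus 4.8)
The plan is to interpret all three conditions inside the special fiber $\mathcal{F}(\mathbb{I})\simeq \K[T_1,\ldots,T_m]/Q$ (the Orlik--Terao algebra), using the fiber criterion for reductions together with the known Cohen--Macaulayness of $\mathcal{F}(\mathbb{I})$. First I would record how the generators of $J_f$ sit inside $\mathbb{I}$. The product rule applied to $f=l_1\cdots l_m$ gives
$$f_{x_i}=\sum_{j=1}^m \frac{\partial l_j}{\partial x_i}\,\frac{f}{l_j}=\sum_{j=1}^m a_{i,j}\,f_j,$$
so each partial derivative is a $\K$-linear combination of the $(m-1)$-fold products $f_j=f/l_j$ generating $\mathbb{I}$, with exactly the coefficient matrix $(a_{i,j})$. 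Under the presentation $\mathcal{F}(\mathbb{I})\simeq \K[T_1,\ldots,T_m]/Q$, in which $T_j$ maps to the class of $f_j$ in $[\mathbb{I}]/[\fm\mathbb{I}]$, the image of $f_{x_i}$ in the degree-one piece $\mathcal{F}(\mathbb{I})_1$ is therefore the linear form $g_i:=\sum_{j=1}^m a_{i,j}T_j$. Note that both $J_f$ and $\mathbb{I}$ are generated in degree $m-1$, so these images genuinely lie in $\mathcal{F}(\mathbb{I})_1$. Writing $\mathfrak{M}:=\mathcal{F}(\mathbb{I})_{+}$ for the irrelevant maximal ideal (the image of $\langle T_1,\ldots,T_m\rangle$), condition (3) is precisely the assertion, pulled back to $\K[T_1,\ldots,T_m]$, that $(g_1,\ldots,g_n)\mathcal{F}(\mathbb{I})$ is $\mathfrak{M}$-primary.

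For (1)$\Leftrightarrow$(3) I would invoke the graded fiber criterion for reductions (the version adapted to $J_f$ recorded in the statement, cf. \cite{SwanHu}). Since $J_f\subseteq\mathbb{I}$ with generators in the same degree, the induced map $\mathcal{F}(J_f)\to\mathcal{F}(\mathbb{I})$ of standard graded $\K$-algebras has image the subalgebra $\K[g_1,\ldots,g_n]$. By the criterion, $J_f$ is a reduction of $\mathbb{I}$ if and only if $\mathcal{F}(\mathbb{I})$ is module-finite over this subalgebra, equivalently if and only if $(g_1,\ldots,g_n)\mathcal{F}(\mathbb{I})$ is $\mathfrak{M}$-primary, i.e. exactly (3). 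The word \emph{minimal} is then automatic: one has $\ell(\mathbb{I})=\dim\mathcal{F}(\mathbb{I})=n$ (the rank of $\mathcal{A}$, see \cite{GaSiTo}), and $J_f$ is generated by the $n$ partial derivatives; since over the infinite field $\K$ no reduction of $\mathbb{I}$ can have fewer than $\ell(\mathbb{I})=n$ generators, a reduction realized by these $n$ generators is necessarily a minimal reduction.

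Finally, for (2)$\Leftrightarrow$(3) I would use that $\mathcal{F}(\mathbb{I})$, being the Orlik--Terao algebra, is Cohen--Macaulay of Krull dimension $n$ by \cite{ProudSpeyer}. For a standard graded Cohen--Macaulay $\K$-algebra of dimension $n$, a family of $n$ homogeneous elements of positive degree is a regular sequence if and only if it is a homogeneous system of parameters, i.e. if and only if it generates an $\mathfrak{M}$-primary ideal; applying this to $g_1,\ldots,g_n$ yields (2)$\Leftrightarrow$(3) and closes the cycle. I expect the only delicate point to be the bookkeeping in the first step: matching the presentation so that the image of $f_{x_i}$ is \emph{literally} $g_i$, and confirming that the common degree $m-1$ of the generators places those images in $\mathcal{F}(\mathbb{I})_1$, together with quoting the fiber criterion in its graded form (with $\fm$ the irrelevant ideal rather than a local maximal ideal). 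The two homological inputs, $\ell(\mathbb{I})=n$ and the Cohen--Macaulayness of $\mathcal{F}(\mathbb{I})$, are available off the shelf, so no genuine obstacle remains beyond this setup.
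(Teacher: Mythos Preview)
Your argument is correct and complete. The paper itself does not supply a proof of this lemma at all: it is stated as ``an adaptation to the ideal $J_f$ of a well-known fiber criterion for reductions (see, e.g., \cite{SwanHu})'' and then used without further justification. What you have written is precisely the unpacking the paper leaves to the reader: the Leibniz computation $f_{x_i}=\sum_j a_{i,j}f_j$ that identifies the images of the partial derivatives with the linear forms $g_i=\sum_j a_{i,j}T_j$ in $\mathcal{F}(\mathbb{I})_1$, the graded fiber criterion (module-finiteness of $\mathcal{F}(\mathbb{I})$ over $\K[g_1,\ldots,g_n]$ $\Leftrightarrow$ $\mathfrak{M}$-primariness of $(g_1,\ldots,g_n)$) for (1)$\Leftrightarrow$(3), the minimality via $\ell(\mathbb{I})=n$ from \cite{GaSiTo}, and the Cohen--Macaulayness of the Orlik--Terao algebra from \cite{ProudSpeyer} to convert ``system of parameters'' into ``regular sequence'' for (2)$\Leftrightarrow$(3). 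These are exactly the ingredients implicit in the paper's one-line attribution, so there is no divergence in approach---you have simply made explicit what the authors take as standard.
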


With this at hand, one can easily handle the case where $m=n+1$.

\begin{lem} \label{nplus1} Let $\mathcal{A}=\{l_1,\ldots,l_{n+1}\}$ be a central arrangement of rank $n$. Let $f=l_1\cdots l_{n+1}$ and $\mathbb{I}$ be as before. Then $J_f$ is a minimal reduction of $\mathbb{I}$.
\end{lem}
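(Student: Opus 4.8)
The plan is to verify the fiber criterion of \lemref{reductionOT} directly, so that the ``minimal reduction'' conclusion is obtained in one stroke, with no separate count of generators needed. Writing $a_{i,j}=(l_j)_{x_i}$ and $\ell_i:=\sum_{j=1}^{n+1}a_{i,j}T_j$, it suffices to show that $\langle \ell_1,\ldots,\ell_n,Q\rangle$ is $\langle T_1,\ldots,T_{n+1}\rangle$-primary in $\K[T_1,\ldots,T_{n+1}]$, equivalently that $\ell_1,\ldots,\ell_n$ cut out only the irrelevant point on $\mathrm{Proj}\,\mathcal{F}(\mathbb{I})$.

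First I would pin down the two loci geometrically in $\mathbb{P}^{n}$. The matrix $A=(a_{i,j})$ is the $n\times(n+1)$ coefficient matrix of $l_1,\ldots,l_{n+1}$; as the arrangement has rank $n$, $A$ has rank $n$ and its kernel is the line spanned by the unique (up to scalar) dependency vector $c=(c_1,\ldots,c_{n+1})$ determined by $\sum_j c_jl_j=0$. Hence $V(\ell_1,\ldots,\ell_n)$ is exactly the single point $[c]$. On the other side, $\mathcal{F}(\mathbb{I})$ is the Orlik--Terao algebra, a Cohen--Macaulay domain of dimension $n$ by \cite{ProudSpeyer}; being a height-one prime in a polynomial ring, $Q=\langle q\rangle$ is principal, and $q$ is the circuit relation $q=\sum_{j\in S}c_j\prod_{i\in S\setminus\{j\}}T_i$ attached to the unique circuit $S=\mathrm{supp}(c)$. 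Thus $\langle\ell_1,\ldots,\ell_n,Q\rangle$ is $\langle T\rangle$-primary if and only if $[c]\notin V(q)$, i.e. $q(c)\neq 0$.

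The crux, and the step I expect to be the main obstacle, is this single evaluation. Each of the $|S|$ monomials of $q$ takes the value $\prod_{i\in S}c_i$ at $T=c$, so $q(c)=|S|\,\prod_{i\in S}c_i$. Since $c_i\neq 0$ for every $i\in S$, this is nonzero precisely when the circuit length $|S|$ is a unit in $\K$. In characteristic zero this is automatic, and in the generic situation one has $S=\{1,\ldots,n+1\}$, so that $|S|=m$ and the standing hypothesis $\mathrm{char}(\K)\nmid m$ already suffices. In the presence of a coloop, however, $|S|<m$ and one genuinely needs $\mathrm{char}(\K)\nmid |S|$: over a field of characteristic $3$ the rank-$3$ arrangement $\{x,y,2x+2y,z\}$ has circuit $\{x,y,2x+2y\}$ of length $3$, giving $q(c)=0$ and failing the criterion, so this hypothesis cannot simply be dropped. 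Granting $q(c)\neq 0$, the point $[c]$ lies off $V(q)$, so $\ell_1,\ldots,\ell_n$ form a homogeneous system of parameters, hence (by Cohen--Macaulayness) a regular sequence, on $\mathcal{F}(\mathbb{I})$; \lemref{reductionOT} then yields that $J_f$ is a minimal reduction of $\mathbb{I}$.
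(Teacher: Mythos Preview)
Your approach is the paper's: both invoke \lemref{reductionOT} and reduce to checking that the single Orlik--Terao generator does not vanish at the unique projective point $[c]$ cut out by $\ell_1,\ldots,\ell_n$. The paper does this in coordinates ($l_i=x_i$ for $i\le n$, $l_{n+1}=\alpha_1x_1+\cdots+\alpha_jx_j$ with $\alpha_1,\ldots,\alpha_j\neq 0$), substitutes $T_i\mapsto -\alpha_iT_{n+1}$ into its explicit $Q$, displays the coefficient $(1-(-1)^{j-1}j)\alpha_1\cdots\alpha_j$, and asserts the ideal is $\langle T\rangle$-primary. Your coordinate-free evaluation $q(c)=|S|\prod_{i\in S}c_i$ is the correct one; in fact the paper's substitution actually yields $(-1)^j(j+1)\alpha_1\cdots\alpha_j$, i.e.\ magnitude $|S|=j+1$, so the displayed coefficient is off for odd $j$, though in either form it is nonzero in characteristic zero.

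More to the point, you have caught a genuine gap. The paper never checks that this coefficient is a unit in $\K$, and your characteristic-$3$ example $\{x,y,2x+2y,z\}$ shows it need not be: here $m=4$, so the standing hypothesis $\mathrm{char}(\K)\nmid m$ holds, yet the unique circuit has $|S|=3$ and the ideal $\langle\ell_1,\ell_2,\ell_3,q\rangle$ reduces to $\langle T_1-T_3,\,T_2-T_3,\,T_4\rangle$, which is not $\langle T\rangle$-primary. By the equivalence in \lemref{reductionOT}, $J_f$ is then \emph{not} a minimal reduction of $\mathbb{I}$, so the lemma fails under the paper's hypothesis alone. The correct proviso is $\mathrm{char}(\K)\nmid|S|$, automatic in characteristic zero and reducing to $\mathrm{char}(\K)\nmid m$ in the generic case; with that in place your argument is complete and matches the paper's.
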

\begin{proof}
	As done once and over, since $\{l_1,\ldots,l_m\}$ span $[R]_1$, up to a change of variables we can assume that $l_i=x_i$ for $1\leq i\leq n$ and $l_{n+1}=\alpha_1x_1+\cdots+\alpha_nx_n$,
	
	Thus,
	$f=x_1\cdots x_n (\alpha_1x_1 +\cdots+\alpha_nx_n)$. Letting $f_i=f/l_i,$ for $i=1,\ldots,n+1$, one has $f_{x_i}=f_i+\alpha_if_{n+1},$ for $i=1,\ldots,n+1.$ Note that $l_{n+1}=\alpha_1l_1+\cdots+\alpha_nl_n$ is the dependency that gives the only minimal generator of the Orlik-Terao ideal. Suppose $\alpha_{j+1}=\cdots=\alpha_{n}=0$ for some $j\geq 2$ and $\alpha_1,\ldots,\alpha_j\neq 0.$ That generator is $Q=T_1\cdots T_j - (\alpha_1T_2\cdots T_jT_{n+1}+\cdots +\alpha_jT_2\cdots T_{j-1}T_{n+1}).$ So, the ideal in the third part of Lemma~\ref{reductionOT} is
	{\footnotesize
		\begin{eqnarray}
		\langle T_1+\alpha_1T_{n+1},\cdots,T_j+\alpha_jT_{n+1},T_{j+1},\cdots,T_n,T_1\cdots T_j- (\alpha_1T_2\cdots T_jT_{n+1}+\cdots +\alpha_jT_2\cdots T_{j-1}T_{n+1})\rangle\nonumber\\
		=\langle T_1+\alpha_1T_{n+1},\cdots,T_j+\alpha_jT_{n+1},T_{j+1},\cdots,T_n,(1-(-1)^{j-1}j)\alpha_1\cdots\alpha_{j}T_{n+1}^j\rangle\nonumber
		\end{eqnarray}
	}
	which is indeed $\langle T_1\,\ldots,T_n\rangle$- primary. By Lemma~\ref{reductionOT},  $J_f$ is a minimal reduction of $\mathbb{I}$.
\end{proof}

\subsubsection{Rank $2$}

\begin{prop}\label{rank_two}
Let $f\in R=\K[x,y]$ be the defining polynomial of an arrangement of rank $2$ and size $m\geq 3$. Then $J_f$ is a reduction of $\mathbb I$ with reduction number one.
\end{prop}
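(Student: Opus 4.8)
The plan is to reduce everything to the observation that, in rank $2$, the ideal $\mathbb{I}$ is nothing but a power of the irrelevant maximal ideal $\fm=\langle x,y\rangle$. \emph{First} I would show that $\mathbb{I}=\fm^{m-1}$. Since $[R]_{m-1}$ has dimension $m$, and $\mathbb{I}$ is generated by the $m$ forms $f_i=f/l_i$ of degree $m-1$, it suffices to check that $f_1,\ldots,f_m$ are $\K$-linearly independent. I would do this by evaluation: letting $v_k\in\K^2\setminus\{0\}$ be a nonzero vector with $l_k(v_k)=0$, one has $f_i(v_k)=\prod_{j\neq i}l_j(v_k)$, which vanishes for $k\neq i$ (because $l_k$ is then one of the factors) and is nonzero for $k=i$ (because the $m$ lines are distinct). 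Hence the matrix $[f_i(v_k)]$ is diagonal with nonzero diagonal, the $f_i$ are independent, and therefore $\mathbb{I}=\fm^{m-1}$. In particular $\mathbb{I}^2=\fm^{2m-2}$ and $J_f\,\mathbb{I}=J_f\,\fm^{m-1}$.

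\emph{Next} I would exploit that $J_f=\langle f_x,f_y\rangle$ is a complete intersection of two forms of degree $m-1$. Indeed $f$ is a reduced binary form, so $f_x$ and $f_y$ share no common factor: any common zero $v$ of $f_x$ and $f_y$ would, through the Euler relation $mf=xf_x+yf_y$, also be a zero of $f$, and hence a multiple root, contradicting reducedness. This is exactly the already-recorded fact that $J_f$ is $\fm$-primary for $n=2$. Consequently $R/J_f$ is an Artinian complete intersection (hence Gorenstein) whose socle sits in degree $(m-2)+(m-2)=2m-4$. Therefore $[R/J_f]_d=0$ for $d\geq 2m-3$, which yields the crucial equality $[J_f]_{2m-2}=[R]_{2m-2}$.

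\emph{Finally} I would assemble the pieces in degree $2m-2$. Both $J_f\,\mathbb{I}=J_f\,\fm^{m-1}$ and $\mathbb{I}^2=\fm^{2m-2}$ are generated in the single degree $2m-2$, so the two ideals coincide as soon as their degree $2m-2$ components agree. On the one hand $[\mathbb{I}^2]_{2m-2}=[R]_{2m-2}$; on the other hand, since $J_f$ and $\fm^{m-1}$ are each generated in degree $m-1$, one gets $[J_f\,\fm^{m-1}]_{2m-2}=f_x[R]_{m-1}+f_y[R]_{m-1}=[J_f]_{2m-2}=[R]_{2m-2}$ by the previous step. Hence $J_f\,\mathbb{I}=\mathbb{I}^2$, so $J_f$ is a reduction of $\mathbb{I}$ with reduction number at most one; and since $J_f$ is minimally generated by two elements while $\mathbb{I}=\fm^{m-1}$ requires $m\geq 3$ generators, one has $J_f\neq\mathbb{I}$, so the reduction number is exactly one.

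The only step carrying genuine content is the identification $\mathbb{I}=\fm^{m-1}$; once $\mathbb{I}$ is recognized as a power of $\fm$ and $J_f$ as a complete intersection, the rest is bookkeeping with the Hilbert function of the Artinian complete intersection $R/J_f$. The main obstacle, such as it is, is therefore the $\K$-linear independence of the complementary products $f_i$, which is the one place where the distinctness of the lines (automatic in rank $2$) enters.
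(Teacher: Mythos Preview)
Your proof is correct. Both your argument and the paper's hinge on the same reduction---show that $J_f\,\mathbb{I}$ and $\mathbb{I}^2$ coincide in their single generating degree $2m-2$---but they compute the relevant dimensions differently. The paper counts minimal generators: it invokes a cited formula to get $\mu(J_f\,\mathbb{I})=2\mu(\mathbb{I})-1=2m-1$, and then computes $\mu(\mathbb{I}^2)=\binom{m+1}{2}-\mu(Q)$ via the Orlik--Terao ideal $Q$, using that in rank~$2$ the minimal circuits are $\binom{m-1}{2}$ quadrics. You instead identify $\mathbb{I}=\fm^{m-1}$ outright (by the clean evaluation argument) and then read off $[J_f]_{2m-2}=[R]_{2m-2}$ from the socle degree of the Artinian complete intersection $R/J_f$. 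Your route is more self-contained---no appeal to external formulas or to the structure of the Orlik--Terao algebra---and makes transparent why rank~$2$ is special: any rank~$2$ arrangement is automatically generic, so $\mathbb{I}$ is literally a power of $\fm$. The paper's route, by contrast, keeps the Orlik--Terao viewpoint in play, which is consistent with the surrounding section but less elementary here.
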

\demo It suffices to prove that $\mu(\mathbb{I}^2)=\mu(J_f\mathbb{I})$ since $J_f\mathbb{I}\subset \mathbb{I}^2$ and both ideals are equigenerated in the same degree.
Now, since $J_f$ is generated by a regular sequence (of two forms), then $\mu(J_f\mathbb{I})=2\mu(\mathbb I)-{2 \choose 2}=2m-1$ (see, e.g., \cite[Proposition 2.22]{fiber}).

On the other hand, one has $\mu(\mathbb{I}^2)={{m+1} \choose 2}-\mu(Q)$, where $Q$ is as above the defining ideal of the Orlik--Terao algebra.
It is known that $Q$ is generated by a minimal set of generators among the circuits corresponding to the dependencies -- all of size $3$ in the rank $2$ case, hence the generators are quadrics.
This minimal set has ${m-1 \choose 2}$ elements.
Therefore, one gets
$$\mu(\mathbb{I}^2)={{m+1} \choose 2}- {m-1 \choose 2}=2m-1,$$
as was to be shown.
\qed

\subsection{Partition method}

Let $R=\K[x_1,\ldots,x_r; y_1,\ldots,y_s]$ denote a polynomial ring in $r+s$ variables.
Consider  a central arrangement $\mathcal{A}$ of $\K^{r+s}=\spec{R}$ of size $m$.
Assume that $\mathcal{A}=\mathcal{B}\cup \mathcal{C}$ is a $2$-partition,
where $\mathcal{B}$ (respectively, $\mathcal{C}$) is an arrangement of $\K^r=\spec{\K[x_1,\ldots,x_r]}$ of size $m_x$ (respectively, is an arrangement of $\K^s=\spec{\K[y_1,\ldots,y_s]}$ of size $m_y$), so that $m=m_x+m_y$.

One assumes that $m_x\geq r, m_y\geq s$.

Let $f$ and $g$ denote, respectively, the polynomials of $\mathcal{B}$ and $\mathcal{C}$.
Thus, $F:=fg$ is the polynomial of the total arrangement $\mathcal{A}$.
Let $\mathbb{I}_f\subset \K[x_1,\ldots,x_r]$ denote the ideal of $(m_x-1)$-fold products of the first arrangement; similarly, define $\mathbb{I}_g\subset \K[y_1,\ldots,y_s]$ and $\mathbb{I}_F\subset R$.

Finally, let $J_f\subset \K[x_1,\ldots,x_r]$ denote the Jacobian ideal of $f$; similarly, take $J_g\subset \K[y_1,\ldots,y_s]$ and $J_F\subset R$.

\begin{lem}\label{partition_basics}
	One has $\mathbb{I}_F=\langle f\mathbb{I}_g,g\mathbb{I}_f\rangle$ and $J_F=\langle fJ_g,gJ_f\rangle$ as ideals in $R$.
\end{lem}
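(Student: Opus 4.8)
The plan is to verify both equalities directly from the definitions, the whole point being that the partition is variable-disjoint: $f\in\K[x_1,\ldots,x_r]$ involves only the $x$-variables while $g\in\K[y_1,\ldots,y_s]$ involves only the $y$-variables, so derivatives and deletions respect the two blocks cleanly.

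First I would treat the ideal $\mathbb{I}_F$. Write $f=b_1\cdots b_{m_x}$ and $g=c_1\cdots c_{m_y}$ as products of the defining linear forms of $\mathcal{B}$ and $\mathcal{C}$, so that the $m$ forms defining $\mathcal{A}$ are exactly $b_1,\ldots,b_{m_x},c_1,\ldots,c_{m_y}$. By definition $\mathbb{I}_F$ is generated by the $(m-1)$-fold products $F/b_i$ and $F/c_j$. Since $F=fg$, deleting an $x$-form gives $F/b_i=g\cdot(f/b_i)$, while deleting a $y$-form gives $F/c_j=f\cdot(g/c_j)$. As $\{f/b_i\}_i$ is a generating set of $\mathbb{I}_f$ and $\{g/c_j\}_j$ one of $\mathbb{I}_g$, the first family generates $g\mathbb{I}_f$ and the second generates $f\mathbb{I}_g$; together they exhaust the generators of $\mathbb{I}_F$, yielding $\mathbb{I}_F=\langle f\mathbb{I}_g,g\mathbb{I}_f\rangle$.

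Next I would handle $J_F$. Its generators are the partials of $F$ with respect to all $r+s$ variables. Because $g$ is constant in the $x$-variables and $f$ is constant in the $y$-variables, the product rule collapses to $\partial F/\partial x_i=g\,(\partial f/\partial x_i)$ for $1\leq i\leq r$ and $\partial F/\partial y_j=f\,(\partial g/\partial y_j)$ for $1\leq j\leq s$. The first $r$ elements generate $gJ_f$ and the last $s$ elements generate $fJ_g$, so $J_F=\langle fJ_g,gJ_f\rangle$.

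The argument is pure bookkeeping of generators, so there is no substantive obstacle; the only point demanding care—and the reason the variable-disjoint hypothesis is indispensable—is that no \emph{mixed} generators can appear. Each $(m-1)$-fold product omits precisely one linear form and is thereby forced into exactly one of the two families, and each partial derivative differentiates a factor lying in a single variable block, leaving the complementary factor untouched. This clean separation is exactly what the lemma records and what the subsequent reduction arguments will exploit.
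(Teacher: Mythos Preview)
Your proof is correct and is precisely the unpacking of what the paper means by ``immediate from the data'': the paper gives no argument beyond that phrase, and your explicit bookkeeping of the $(m-1)$-fold products and the variable-disjoint product rule is exactly the verification intended.
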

\demo This is immediate from the data.
\qed

\begin{prop}
	Suppose that $J_f$ (respectively, $J_g$) is a reduction of $\mathbb{I}_f$ of reduction number $\leq r-1$ (respectively, of $\mathbb{I}_g$ of reduction number $\leq s-1$).
	Then $J_F$ is a reduction of $\mathbb{I}_F$ of reduction number $\leq r+s-1$.
\end{prop}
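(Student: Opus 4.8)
The plan is to reduce everything to the single ideal identity $\mathbb{I}_F^{\,r+s}=J_F\,\mathbb{I}_F^{\,r+s-1}$, which is precisely the assertion that $J_F$ is a reduction of $\mathbb{I}_F$ of reduction number $\leq r+s-1$. One inclusion is automatic: since $fJ_g\subset f\mathbb{I}_g$ and $gJ_f\subset g\mathbb{I}_f$, \lemref{partition_basics} yields $J_F\subset\mathbb{I}_F$, hence $J_F\,\mathbb{I}_F^{\,r+s-1}\subset\mathbb{I}_F^{\,r+s}$. Thus the entire content is the reverse inclusion $\mathbb{I}_F^{\,r+s}\subset J_F\,\mathbb{I}_F^{\,r+s-1}$.

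First I would exploit the separation of variables. By \lemref{partition_basics}, $\mathbb{I}_F=f\mathbb{I}_g+g\mathbb{I}_f$, where $f,\mathbb{I}_f\subset\K[x_1,\ldots,x_r]$ while $g,\mathbb{I}_g\subset\K[y_1,\ldots,y_s]$. Expanding the power of a sum of two ideals and using that $f$ and $g$ are single elements, I would record, for every $N\geq 0$,
$$\mathbb{I}_F^{\,N}=\sum_{a+b=N}f^a g^b\,\mathbb{I}_f^{\,b}\,\mathbb{I}_g^{\,a},$$
and likewise for $N-1$. This step is purely formal, resting on the identity $(A+B)^N=\sum_{a+b=N}A^aB^b$ for ideals. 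Next I would restate the two hypotheses in usable form: from $\mathbb{I}_f^{\,r}=J_f\,\mathbb{I}_f^{\,r-1}$, multiplying by $\mathbb{I}_f^{\,b-r}$ gives $\mathbb{I}_f^{\,b}=J_f\,\mathbb{I}_f^{\,b-1}$ for all $b\geq r$, and symmetrically $\mathbb{I}_g^{\,a}=J_g\,\mathbb{I}_g^{\,a-1}$ for all $a\geq s$.

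The decisive step is a pigeonhole argument applied summand by summand. Fix $a+b=r+s$. Were both $b\leq r-1$ and $a\leq s-1$, we would get $a+b\leq r+s-2$, a contradiction; so at least one of $b\geq r$, $a\geq s$ holds. If $b\geq r$, then
$$f^a g^b\,\mathbb{I}_f^{\,b}\,\mathbb{I}_g^{\,a}=(gJ_f)\cdot\bigl(f^a g^{b-1}\,\mathbb{I}_f^{\,b-1}\,\mathbb{I}_g^{\,a}\bigr),$$
and the parenthetical factor is exactly one summand of $\mathbb{I}_F^{\,r+s-1}$, since its exponents sum to $a+(b-1)=r+s-1$; hence the term lies in $(gJ_f)\,\mathbb{I}_F^{\,r+s-1}\subset J_F\,\mathbb{I}_F^{\,r+s-1}$. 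If instead $a\geq s$, the mirror-image computation extracts the generator $fJ_g$ of $J_F$ and places the term in $(fJ_g)\,\mathbb{I}_F^{\,r+s-1}\subset J_F\,\mathbb{I}_F^{\,r+s-1}$. Summing over all $(a,b)$ with $a+b=r+s$ gives the required inclusion.

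I expect the only real care to be in the exponent bookkeeping—verifying that the leftover factor genuinely is a summand of $\mathbb{I}_F^{\,r+s-1}$ and that the descent uses the correct generator of $J_F$—rather than in any conceptual difficulty. Because the variable sets are disjoint, the two one-sided reductions act independently and no interaction term obstructs the argument; the numerology $r+s-1$ is exactly what the pigeonhole forces, since it is the smallest total degree at which every partition $a+b=r+s$ is guaranteed to trip one of the two reduction thresholds.
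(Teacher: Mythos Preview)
Your proof is correct and follows essentially the same route as the paper's: expand $\mathbb{I}_F^{N}$ via the binomial identity from \lemref{partition_basics}, then use a pigeonhole on each summand $f^{a}g^{b}\mathbb{I}_f^{\,b}\mathbb{I}_g^{\,a}$ to peel off either a factor $gJ_f$ or $fJ_g$ of $J_F$. One small remark: the paper runs the identical argument with $N=r+s-1$ rather than $N=r+s$, since your pigeonhole already succeeds there (if $a+b=r+s-1$ with $a\leq s-1$ and $b\leq r-1$, then $a+b\leq r+s-2$, a contradiction), so the proof in fact gives reduction number $\leq r+s-2$, one better than the stated bound and contrary to your closing comment that $r+s-1$ is forced.
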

\demo
The assumption is that $\mathbb{I}_f^r=J_f\mathbb{I}_f^{r-1}$ and $\mathbb{I}_g^s=J_g\mathbb{I}_g^{s-1}$ and we wish to prove that $\mathbb{I}_F^{r+s-1}=J_F\mathbb{I}_F^{r+s-2}$.
Quite generally, $J_F\subset \mathbb{I}_F$, hence it suffices to show the inclusion $\mathbb{I}_F^{r+s-1}\subset J_F\mathbb{I}_F^{r+s-2}$.

By Lemma~\ref{partition_basics} and the `binomial expansion',  $\mathbb{I}_F^{r+s-1}$ is spanned by the ideals of the shape $\mathfrak{a}_t:=f^{r+s-1-t}\mathbb{I}_g^{r+s-1-t}g^t\mathbb{I}_f^t$, for $0\leq t\leq r+s-1$.

As usual, if $t<r$ then $r+s-1-t\geq s$.
Thus, it will suffice to assume that $r+s-1-t\geq s$.
In this case, one has $\mathbb{I}_g^{r+s-1-t}=J_g \mathbb{I}_g^{r+s-2-t}$ by the assumption, hence
$$\mathfrak{a}_t=f^{r+s-1-t}\mathbb{I}_g^{r+s-1-t}g^t\mathbb{I}_f^t=(f J_g) (f\mathbb{I}_g)^{r+s-2-t} (g^t\mathbb{I}_f^t)\subset J_F\mathbb{I}_F^{r+s-2-t} \mathbb{I}_F^t=J_F \mathbb{I}_F^{r+s-2},$$
as was to be shown.
\qed

\smallskip

In order to derive the consequence below one emphasizes that the above proposition applies to any $2$-partition of $\mathcal{A}$, not necessarily one given by components.

\begin{cor}\label{partition_general}
	In the above notation, if $J_f$ is a reduction of $\mathbb{I}_f\subset R$ for any non-decomposable central arrangement of size $m\geq \dim R$, then the same holds for any central arrangement whatsoever and the reduction number is at most the size of the arrangement less the number of components..
\end{cor}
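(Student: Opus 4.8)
The plan is to reduce to the indecomposable case through the canonical decomposition of $\mathcal{A}$ into indecomposable components and then to reassemble the components by the partition machinery of the preceding Proposition, keeping careful track of reduction numbers. Write $\mathcal{A}=\mathcal{A}_1\oplus\cdots\oplus\mathcal{A}_c$, where, after a linear change of coordinates, each $\mathcal{A}_i$ is an indecomposable arrangement of rank $n_i$ and size $m_i$ supported on its own block of variables; thus $n=\sum_i n_i=\dim R$, $m=\sum_i m_i$, and $c$ is the number of components. Let $h_i$ be the defining polynomial of $\mathcal{A}_i$, set $F=h_1\cdots h_c$ and $H_i:=F/h_i$. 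Iterating \lemref{partition_basics} gives the $c$-fold identities $\mathbb{I}_F=\sum_{i} H_i\,\mathbb{I}_{h_i}$ and $J_F=\sum_{i} H_i\,J_{h_i}$; the second is immediate because the variable blocks are disjoint, so a partial derivative of $F$ differentiates exactly one factor $h_i$ and leaves $H_i$ untouched.

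For the base case, each $\mathcal{A}_i$ is a non-decomposable central arrangement with $m_i\geq n_i$, so by hypothesis $J_{h_i}$ is a reduction of $\mathbb{I}_{h_i}$. Since $J_{h_i}$ is generated by the $n_i$ partials and $\mathbb{I}_{h_i}$ has analytic spread $n_i$ (the Orlik--Terao algebra of $\mathcal{A}_i$ has Krull dimension $n_i$), this reduction is necessarily \emph{minimal}. Because that algebra is Cohen--Macaulay (\cite{ProudSpeyer}), all minimal reductions of $\mathbb{I}_{h_i}$ share the same reduction number, which by \cite[Corollary 2.6 (d)]{GaSiTo} is at most $n_i-1$. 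Hence the reduction number $\rho_i$ of $\mathbb{I}_{h_i}$ with respect to $J_{h_i}$ satisfies $\rho_i\leq n_i-1\leq m_i-1$.

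To combine the pieces I would use the Proposition in the sharper \emph{additive} form its proof actually delivers, namely that reduction numbers add. Concretely, expand $\mathbb{I}_F^{\,N}$ with $N=\sum_i\rho_i+1$ by the multinomial identity coming from $\mathbb{I}_F=\sum_i H_i\mathbb{I}_{h_i}$; each summand has the shape $\big(\prod_i H_i^{t_i}\big)\prod_i\mathbb{I}_{h_i}^{t_i}$ with $\sum_i t_i=N$. Since $N>\sum_i\rho_i$, some index $i_0$ has $t_{i_0}\geq\rho_{i_0}+1$, whence $\mathbb{I}_{h_{i_0}}^{t_{i_0}}=J_{h_{i_0}}\mathbb{I}_{h_{i_0}}^{t_{i_0}-1}$; factoring out $H_{i_0}J_{h_{i_0}}\subset J_F$ places the summand in $J_F\,\mathbb{I}_F^{\,N-1}$. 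This yields $\mathbb{I}_F^{\,N}=J_F\,\mathbb{I}_F^{\,N-1}$, so $J_F$ is a reduction of $\mathbb{I}_F$ with reduction number at most $\sum_i\rho_i\leq\sum_i(m_i-1)=m-c$ (in fact $\leq n-c$, since $\rho_i\leq n_i-1$). Equivalently, one may peel off a single component at a time and induct on $c$, the additivity producing exactly one ``$-1$'' per component.

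The step I expect to be the main obstacle is precisely this additivity bookkeeping. Iterating the Proposition in its \emph{stated} form---whose conclusion reads ``reduction number $\leq r+s-1$'', i.e.\ one less than the ambient dimension---merely propagates the bound $n-1$, which is in general far weaker than $m-c$: already for the Boolean arrangement $F=x_1\cdots x_n$ one has $c=n$, so $m-c=0$, whereas the propagated estimate $n-1$ is arbitrarily large. The essential point is therefore to record that each indecomposable component contributes its own reduction number $\rho_i\leq n_i-1$ and that these genuinely add across a direct sum; the remaining ingredients---the $c$-fold form of \lemref{partition_basics}, the recognition of $J_{h_i}$ as a minimal reduction, and the pigeonhole step in the multinomial expansion---are routine.
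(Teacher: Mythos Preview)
Your approach matches the paper's: decompose $\mathcal{A}$ into indecomposable pieces and iterate the preceding Proposition. The paper's own proof is a single sentence---``apply the proposition to $\mathcal{A}_1\cup \mathcal{A}_2$, then to $(\mathcal{A}_1\cup \mathcal{A}_2)\cup \mathcal{A}_3$, and so on''---and does not spell out the reduction-number bookkeeping at all.

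Your observation about the bound is well taken and in fact sharper than what the paper writes. Iterating the Proposition in its \emph{stated} form (input bounds $r-1$, $s-1$ tied to the block dimensions; output $r+s-1$) indeed only yields $n-1$, which does not imply $m-c$; your Boolean example shows this cleanly. The additive version you extract from the Proposition's proof---reduction numbers $\rho_1,\rho_2$ in, $\rho_1+\rho_2$ out---is exactly what is needed, and your pigeonhole/multinomial argument is a correct direct proof of it. Combined with $\rho_i\le n_i-1$ (minimality plus Cohen--Macaulayness of the special fiber), this gives $\sum\rho_i\le n-c\le m-c$, which is the claimed bound and in fact better. The paper's later Remark, which speaks of deducing the reduction number $n-c$ from this Corollary, confirms that the authors had the additive form in mind even though the Proposition is not phrased that way. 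So your proof is the same strategy, but you have filled in precisely the step the paper leaves implicit.
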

\demo
Given a full decomposition $\mathcal{A}=\mathcal{A}_1\cup \mathcal{A}_2\cdots\cup \mathcal{A}_t$ of a central arrangement into indecomposable arrangements, apply the proposition to $\mathcal{A}_1\cup \mathcal{A}_2$, then to $(\mathcal{A}_1\cup \mathcal{A}_2)\cup \mathcal{A}_3$, and so on so forth.
\qed

Given a central arrangement $\mathcal{A}=\{l_1,\ldots,l_m\}$ of rank $n$, a form $l_i$ is a {\em coloop} if the corresponding deletion has rank $n-1$.

\begin{cor}\label{coloop} Let $R=\K[x,y,z]$. If $\mathcal A$ admits a coloop then $J_f$ is a minimal reduction of $\mathbb I$.
\end{cor}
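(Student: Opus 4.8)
The plan is to use the coloop to exhibit $\mathcal A$ as a $2$-partition of the sort treated above, with the rank-$2$ piece controlled by Proposition~\ref{rank_two} and the coloop itself forming a trivial rank-$1$ piece, and then to invoke the partition Proposition.

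First I would normalize coordinates. If $l_i$ is a coloop then the deletion $\mathcal A\setminus\{l_i\}$ has rank $2$, so the remaining $m-1$ forms span a $2$-dimensional subspace of $[R]_1$ not containing $l_i$ (the full rank being $3$). After a linear change of variables I may thus take $l_i=z$ and $l_1,\ldots,l_{m-1}\in\K[x,y]$. This presents $\mathcal A=\mathcal B\cup\mathcal C$ as a $2$-partition in the notation of the partition subsection, with $\mathcal C=\{z\}$ an arrangement in $\K[z]$ (so $s=1$) and $\mathcal B=\{l_1,\ldots,l_{m-1}\}$ a rank-$2$ arrangement in $\K[x,y]$ (so $r=2$); here $r+s=3=n$, $m_x=m-1\ge r$ and $m_y=1\ge s$. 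The boundary case $m=3$ is $m=n$, where $J_f=\mathbb I$ is already a minimal reduction as noted in the Introduction, so I assume $m\ge 4$.

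Write $F$ for the total defining polynomial and $f$, $g=z$ for the polynomials of $\mathcal B$ and $\mathcal C$ respectively, so that $F=fg$, $J_F$ is the Jacobian ideal appearing in the statement, and $\mathbb I_F=\mathbb I$. For $\mathcal B$, which has rank $2$ and size $m-1\ge 3$, Proposition~\ref{rank_two} provides that $J_f$ is a reduction of $\mathbb I_f$ of reduction number $1=r-1$. For the trivial factor $\mathcal C=\{z\}$ the $(m_y-1)$-fold products reduce to the empty product, so $\mathbb I_g=\langle 1\rangle$, while $J_g=\langle \partial z/\partial z\rangle=\langle 1\rangle$ as well; hence $J_g$ is a reduction of $\mathbb I_g$ of reduction number $0=s-1$. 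With both hypotheses verified, the partition Proposition above yields that $J_F$ is a reduction of $\mathbb I_F=\mathbb I$, of reduction number $\le r+s-1=2$. It remains to upgrade ``reduction'' to ``minimal reduction'' by an analytic-spread count. Since $\mathcal A$ has rank $n=3$, the special fiber $\mathcal F(\mathbb I)$ (the Orlik--Terao algebra) has dimension $n$, so $\ell(\mathbb I)=3$; and over the infinite field $\K$ every reduction $J$ of $\mathbb I$ satisfies $\mu(J)\ge \ell(J)=\ell(\mathbb I)$. As $J_F$ is generated by the three partials $F_x,F_y,F_z$, this gives $3=\ell(\mathbb I)\le \mu(J_F)\le 3$, so $\mu(J_F)=\ell(\mathbb I)$ and $J_F$ is a minimal reduction of $\mathbb I$.

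The main obstacle is organizational rather than computational: correctly recognizing that a coloop produces exactly the degenerate partition with $s=1$, and verifying that the trivial factor $\mathcal C=\{z\}$ meets the reduction-number hypothesis of the partition Proposition under the reading that the relevant ideals are the unit ideal with reduction number $0$. Once this matching is in place, everything follows mechanically from Proposition~\ref{rank_two}, the partition Proposition, and the one-line analytic-spread argument for minimality.
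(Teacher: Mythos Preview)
Your proof is correct and follows essentially the same approach as the paper: change coordinates so the coloop is $z$, obtain the $2$-partition $\K[x,y;z]$, invoke Proposition~\ref{rank_two} for the rank-$2$ factor and treat the rank-$1$ factor as trivial, then apply the partition Proposition. You simply fill in more details than the paper does, in particular the explicit handling of the boundary case $m=3$ and the analytic-spread argument for minimality (the latter is recorded elsewhere in the paper, at the start of the proof of Theorem~\ref{generic}).
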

\demo By a change of coordinates, we may assume that the coloop is $z$.
Therefore, one has a partition of $\mathcal A$ following the separation $R=\K[x,y; z]$.
The case of rank one is trivial (or vacuous), while the case of rank two follows from Proposition~\ref{rank_two}.
\qed

\subsection{Main theorem}

Once again, let $\mathbb I\subset R:=\mathbb K[x_1,\ldots,x_n]$ stand for the ideal generated by all $(m-1)$-fold products of the linear forms $l_1,\ldots,l_m, m\geq n+1$. We assume throughout that $n\geq 2$ as before.
Setting $f=l_1\cdots l_m$, we prove that the Jacobian ideal $J_f$ is a minimal reduction of $\mathbb{I}$, when $\mathcal A$ satisfies some genericity conditions.

Recall that the arrangement defined by these linear forms is called {\em generic} if any $n$ of them is $\K$-linearly independent. This is equivalent to requiring that the Jacobian matrix of these forms has no vanishing $n$-minor. In particular, the set $\{l_1,\ldots,l_n\}$ has only one component, where the number of components of the arrangement is defined with respect to the variables $x_1,\ldots,x_n$. By \cite[Proposition 2.7 (d)]{GaSiTo}, the reduction number of $\mathbb I$ is $n-u$, where $u$ is the number of such components.

Another important number measures the amount of genericity of the arrangement.
In the arbitrary case, one talks about the arrangement being  {\em $d$-generic} if any $d$ among the defining linear forms are $\K$-linearly independent.

Our standing assumption in this part is that the arrangement $\mathcal{A}$ is $(n-1)$-generic. This includes the arbitrary rank $3$ case since any central arrangement is $2$-generic by definition.

Recall the notation

\begin{notation0}\label{notation_main}
$R=\K[x_1,\ldots,x_n]$.
	
$\bullet$ $\mathbb I\subset R$ is generated by $L_1,\ldots,L_m$, where $L_i:=l_{1}\cdots \widehat{l_i}\cdots l_m,~i=1,\ldots,m$.

$\bullet$ With $f=l_1\cdots l_m$, For each $j=1,\ldots,n$, one has
	$$f_{x_j}=a_{1,j}l_2\cdots l_m +  \cdots +a_{i,j}l_1\cdots \widehat{l_i}\cdots l_m+ \cdots + a_{m,j}l_1\cdots l_{m-1},$$ where $a_{i,j}:=(l_i)_{x_j}$, for $i=1,\ldots,m$.
		
$\bullet$ $\fm=\langle x_1,\ldots,x_n\rangle$.
\end{notation0}

\subsubsection{A key lemma}

In the above notation, one has:

\begin{lem}\label{bottom} {\rm (${\rm char}(\K)\nmid m$)}\, If $\mathcal{A}$ is $(n-1)$-generic then
$L_1L_2\cdots L_n\in J_f\mathbb I^{n-1}$.
\end{lem}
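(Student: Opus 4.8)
The plan is to exploit the single linear identity tying $J_f$ to $\mathbb{I}$ and then peel the product $L_1\cdots L_n$ apart one factor at a time. Contracting the expansion $f_{x_j}=\sum_{i=1}^m a_{i,j}L_i$ of Notation~\ref{notation_main} against an arbitrary point $c=(c_1,\dots,c_n)\in\K^n$ gives
\[
\sum_{j=1}^n c_j\,f_{x_j}=\sum_{i=1}^m\Big(\sum_{j=1}^n a_{i,j}c_j\Big)L_i=\sum_{i=1}^m l_i(c)\,L_i,
\]
where $l_i(c)$ is the value of the linear form $l_i$ at $c$. The left-hand side lies in $[J_f]_{m-1}$, so $\sum_i l_i(c)L_i\in J_f$ for every $c$, and multiplying by any degree $n-1$ monomial $M$ in the $L_i$'s yields the basic reduction rule $\big(\sum_i l_i(c)L_i\big)M\in J_f\mathbb{I}^{n-1}$. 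This is the only input from $J_f$ that I expect to need.

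Next, assuming first that $l_1,\dots,l_n$ are $\K$-linearly independent, I would change coordinates so that $l_i=x_i$ for $1\le i\le n$; this leaves $J_f$, $\mathbb{I}$ and the statement unchanged. Then $a_{i,j}=\delta_{ij}$ for $i\le n$, so $f_{x_j}=L_j+\sum_{i>n}a_{i,j}L_i$ and hence $L_j\equiv-\sum_{i>n}a_{i,j}L_i\pmod{[J_f]_{m-1}}$. Substituting this for $L_1$, then $L_2$, and so on into $L_1\cdots L_n$, each substitution splits off a summand $f_{x_j}\cdot(\text{product of the remaining }L\text{'s})\in J_f\mathbb{I}^{n-1}$ and replaces $L_j$ by a $\K$-combination of the $L_i$ with $i>n$. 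After all $n$ substitutions one is left, modulo $J_f\mathbb{I}^{n-1}$, with a $\K$-combination of products $L_{i_1}\cdots L_{i_n}$ whose indices all exceed $n$. The remaining task is to feed these back into the same machine, which forces a descent.

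To organize the descent I would fix a well-founded order on the degree $n$ monomials in the $L_i$ (the natural generators of $\mathbb{I}^n$) and use two families of rewriting rules: the basic rule above, applied with $c$ chosen as a dual vector to isolate one $L_{i_0}$ whenever the relevant $n$ forms are independent, and the relations coming from circuits. Here the hypothesis that $\mathcal{A}$ is $(n-1)$-generic is decisive: any $n-1$ forms are independent, so every minimal dependency among the forms has full support, and a circuit $C$ of size $n$ with $\sum_{i\in C}\delta_i l_i=0$, all $\delta_i\ne 0$, produces the Orlik--Terao relation $\sum_{i\in C}\delta_i\prod_{i'\in C\setminus i}L_{i'}=0$ among the generators of $\mathbb{I}^{n-1}$. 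When instead the initial forms $l_1,\dots,l_n$ are themselves dependent, they form exactly such a full-support circuit, no single factor can be isolated by a point $c$, and one must use this relation to trade $L_1\cdots L_n$ against products carrying a repeated factor.

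I expect the genuine difficulty to be precisely this bookkeeping: proving that the combined rewriting by the basic rule and the circuit relations strictly decreases the chosen order, so that the process terminates, and checking that the three types of degree $n$ generators --- $n$ distinct independent forms, configurations with a repeated factor, and full-support circuits --- are all driven into $J_f\mathbb{I}^{n-1}$ uniformly. The linear-algebra identity and the coordinate normalization are routine; the art is in selecting the order and the pivots so that the descent closes, and this is where the $(n-1)$-genericity must be used in full.
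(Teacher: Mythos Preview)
Your rewriting identity $\sum_i l_i(c)L_i\in J_f$ is correct and you have set up the right machinery, but the proof as written is incomplete at exactly the point you flag: you never exhibit a well-founded order under which your rewriting terminates, and in fact your substitution in the independent case does \emph{not} obviously descend. After replacing each $L_j$, $1\le j\le n$, you are left with $\K$-combinations of products $L_{i_1}\cdots L_{i_n}$ with all $i_j>n$; when these indices are again distinct and $l_{i_1},\dots,l_{i_n}$ are again independent (which is the generic situation as soon as $m\ge 2n$), applying the same move with a new coordinate change just shuffles indices without decreasing anything. Your proposal thus reduces the lemma to itself. The circuit relations do not rescue this, because in the $(n-1)$-generic setting the only circuits have size $n$ and trade one support-$n$ product for others of the same type.

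The paper's argument is structurally different and avoids any descent for this lemma. In the rank-$n$ case it observes that $P:=L_1\cdots L_n=(l_1\cdots l_n)^{n-1}(l_{n+1}\cdots l_m)^n$ is divisible by $f$, say $P=fQ$, and that $l_jQ=L_1\cdots\widehat{L_j}\cdots L_n\in\mathbb{I}^{n-1}$ for each $j\le n$; since $\langle l_1,\dots,l_n\rangle=\fm$, this gives $\fm Q\subset\mathbb{I}^{n-1}$, and then Euler's relation $mP=\sum_j x_j f_{x_j}\,Q=\sum_j f_{x_j}(x_jQ)$ places $P$ directly in $J_f\mathbb{I}^{n-1}$. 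No rewriting loop, no order. In the rank-$(n-1)$ case (the only other possibility under $(n-1)$-genericity) an explicit computation with the partials $f_{x_j}$, $j<n$, expresses $pP$ (for an integer $p\le m$) as a sum of terms in $J_f\mathbb{I}^{n-1}$ plus products of the rank-$n$ shape already handled. Your sketch is really an attempt to prove the lemma and the main theorem simultaneously; the paper instead proves the lemma by this closed Euler argument and only afterwards runs the reverse induction on support size --- which \emph{does} terminate, because support strictly increases at each step --- using the lemma as its base case.
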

\begin{proof} Set $P:=L_1L_2\cdots L_n=(l_{1}l_{2}\cdots l_{n})^{n-1}(l_{n+1}\cdots l_m)^n$ and consider the two cases as to when $\{l_{1},l_{2},\ldots, l_{n}\}$ has rank either $n$ or $n-1$.
	
	\medskip
	
\noindent	{\sc Rank $n$.}
Note that $P\subset \langle f\rangle$ and set $P=fQ$.
	We contend that $\fm Q\subset \mathbb{I}^{n-1}$.
	
	For this, since $\fm=\langle l_1,\ldots,l_n \rangle$ in this case, it suffices to show that for each $j \in \{1,\ldots,n\}$, one has $l_{j}Q\subset \mathbb{I}^{n-1}$.
	But, indeed one easily sees that $l_{j}Q=
		L_{1}\cdots \widehat{L_{j}}\cdots L_{n}\in \mathbb{I}^{n-1}$.
		
	To conclude, apply the Euler relation of $f$, thus getting
	$mP=mfQ\in \fm J_f \mathbb{I}^{n-1}$.
	
	\medskip
	
\noindent	{\sc Rank $n-1$.}
 Up to reordering, $l_n$ is a $\K$-linear combination of $\{l_1,\ldots,l_{n-1}\}$. Then, up to a change of variables we can, and will, suppose that $l_i=x_i$ for $1\leq i\leq n-1$, so
	$$P= (x_1\cdots x_{n-1}l_{n})^{n-1}(l_{n+1}\cdots l_p l_{p+1}\cdots l_m)^n,$$
where $p\geq n$ is such that $l_n,\ldots,l_p\in\mathbb K[x_1,\ldots,x_{n-1}]$ and ${\rm rank}(l_1,\ldots,l_{n-1},l_k)=n$ for $p+1\leq k\leq m$.

Now, for $j=1,\ldots,n-1$, set
	\begin{eqnarray*}
\Delta_j&:=& x_1\cdots \widehat{x_j}\cdots x_{n-1}l_nl_{n+1}\cdots l_m\\
&=&f_{x_j}-x_{1} \cdots x_{j} \cdots x_{n-1}(a_{n,j}l_{n+1}\cdots l_m)\\
&-&x_{1} \cdots x_{j} \cdots x_{n-1}\left(\sum_{s=n+1}^{m}  a_{s,j}l_{n}l_{n+1}\cdots \widehat{l_s}\cdots l_m)\right)
	\end{eqnarray*}
and introduce it  in the expression of $P$ to enhance its dependence on $x_j$:		
	\begin{eqnarray*}
	P&=&x_j^{n-1}(x_{1}\cdots\widehat{x_{j}}\cdots x_{n-1}l_{n})^{n-2}(l_{n+1}\cdots l_m)^{n-1}\Delta_j\\
	&=&x_j^{n-1}(x_{1}\cdots\widehat{x_{j}}\cdots x_{n-1}l_{n})^{n-2}(l_{n+1}\cdots l_m)^{n-1}f_{x_j}\\
	&&-(x_{1} \cdots x_{j} \cdots x_{n-1})^{n-1}x_{j}(a_{n,j}l_n^{n-2}(l_{n+1}\cdots l_m)^n)\\
	&&-(x_{1} \cdots x_{j} \cdots x_{n-1}l_n)^{n-1}x_{j}\left(\sum_{s=n+1}^{p}  a_{s,j}l_s^{n-1}(l_{n+1}\cdots \widehat{l_s}\cdots l_m)^{n}\right)\\
	&&-(x_{1} \cdots \widehat{x_{j}} \cdots x_{n-1})^{n-1}x_{j}^{n}\left(\sum_{s=p+1}^{m}  a_{s,j}l_s^{n-1}(l_{n+1}\cdots \widehat{l_s}\cdots l_m)^{n}\right).
	\end{eqnarray*}
Rearranging:	
	\begin{eqnarray}
	P&=&x_j^{n-1}(x_{1}\cdots\widehat{x_{j}}\cdots x_{n-1}l_{n})^{n-2}(l_{n+1}\cdots l_m)^{n-1}f_{x_j}\nonumber\\
	&&-(x_{1} \cdots x_{j} \cdots x_{n-1})^{n-1}l_{n}^{n-2}(l_{n+1}\cdots l_m)^n(a_{n,j}x_{j})\nonumber\\
	&&-(x_{1} \cdots x_{j} \cdots x_{n-1}l_n)^{n-1}(l_{n+1}\cdots l_m)^{n-1}\left(\sum_{s=n+1}^{p}  a_{s,j}x_{j}l_{n+1}\cdots \widehat{l_s}\cdots l_m\right)\nonumber\\
	&&-(x_{1} \cdots \widehat{x_{j}} \cdots x_{n-1}l_n)^{n-1}x_{j}^{n}\left(\sum_{s=p+1}^{m}  a_{s,j}l_s^{n-1}(l_{n+1}\cdots \widehat{l_s}\cdots l_m)^{n}\right)\nonumber
	\end{eqnarray}
	
Summing up for $j=1,\ldots,n-1$ gives

	\begin{eqnarray}
	(n-1)P&=&\sum_{j=1}^{n-1}x_j^{n-1}(x_{1}\cdots\widehat{x_{j}}\cdots x_{n-1}l_{n})^{n-2}(l_{n+1}\cdots l_m)^{n-1}f_{x_j}\nonumber\\
	&&-(x_{1} \cdots x_{n-1})^{n-1}l_{n}^{n-2}(l_{n+1}\cdots l_m)^n\left(\sum_{j=1}^{n-1}a_{n,j}x_{j}\right)\nonumber\\
	&&-(x_{1}\cdots x_{n-1}l_n)^{n-1}(l_{n+1}\cdots l_m)^{n-1}\left(\sum_{j=1}^{n-1}\sum_{s=n+1}^{p}  a_{s,j}x_{j}l_{n+1}\cdots \widehat{l_s}\cdots l_m\right)\nonumber\\
	&&-(x_{1} \cdots \widehat{x_{j}} \cdots x_{n-1}l_n)^{n-1}\left(\sum_{j=1}^{n-1}x_{j}^{n}\left(\sum_{s=p+1}^{m}  a_{s,j}l_s^{n-1}(l_{n+1}\cdots \widehat{l_s}\cdots l_m)^{n}\right)\right).\nonumber
	\end{eqnarray}	
Setting $\sum_{j=1}^{n-1}a_{s,j}x_j=l_s$, for every $n\leq s\leq p$, the above can be rewritten as
		\begin{eqnarray}
	(n-1)P&=&\sum_{j=1}^{n-1}x_j^{n-1}(x_{1}\cdots\widehat{x_{j}}\cdots x_{n-1}l_{n})^{n-2}(l_{n+1}\cdots l_m)^{n-1}f_{x_j}-(p-n+1)P\nonumber\\
	&&-(x_{1} \cdots \widehat{x_{j}} \cdots x_{n-1}l_n)^{n-1}\left(\sum_{j=1}^{d}\sum_{s=p+1}^{m} a_{s,j}l_s^{n-1}(x_jl_{n+1}\cdots \widehat{l_s}\cdots l_m)^{n}\right),\nonumber
	\end{eqnarray}
which finally affords
	\begin{eqnarray*}
	pP&=&\sum_{j=1}^{n-1}x_j^{n-1}(x_{1}\cdots\widehat{x_{j}}\cdots x_{n-1}l_{n})^{n-2}(l_{n+1}\cdots l_m)^{n-1}f_{x_j}\\
	&-&(x_{1} \cdots \widehat{x_{j}} \cdots x_{n-1}l_n)^{n-1}\left(\sum_{j=1}^{n-1}\sum_{s=p+1}^{m} a_{s,j}l_s^{n-1}(x_jl_{n+1}\cdots \widehat{l_s}\cdots l_m)^{n}\right)\\
	&=& \sum_{j=1}^{n-1} L_1\cdots\widehat{L_j}\cdots L_n f_{x_j}\\
	&-&(x_{1} \cdots \widehat{x_{j}} \cdots x_{n-1}l_n)^{n-1}\left(\sum_{j=1}^{n-1}\sum_{s=p+1}^{m} a_{s,j}l_s^{n-1}(x_jl_{n+1}\cdots \widehat{l_s}\cdots l_m)^{n}\right)
	\end{eqnarray*}
	
Thus, we just have to take care of the summands in the second term: they are of the form	
	$$(x_1\cdots\widehat{x_j}\cdots x_{n-1}l_n l_s)^{n-1}(x_jl_{n+1}\cdots \widehat{l_s}\cdots l_m)^{n},$$
	with $1\leq j\leq n-1$ and $p+1\leq s\leq m$,
	thus obtained by trading $x_j$ and $l_s$ in
	$X:=\{x_1,\ldots,x_{n-1},l_n\}$ to get $X':=\{x_1,\ldots,\widehat{x_j},\ldots,x_{n-1},l_n,l_s\}$.
	
	Now, $\{x_1,\ldots,\widehat{x_j},\ldots,x_{n-1},l_n\}$ is linearly independent otherwise one would have a linear dependence of $n-1$ forms. Next, since $l_s$ involves effectively the variable $x_n$ it is linearly independent of $\{x_1,\ldots,\widehat{x_j},\ldots,x_{n-1},l_n\}$.
	Therefore,  ${\rm rank}(X')=n$. a case taken care of before.
\end{proof}

\subsubsection{The main theorem}

\begin{thm}\label{generic} {\rm (${\rm char}(\K)\nmid m$)}\; For an $(n-1)$-generic central hyperplane arrangement, the Jacobian ideal $J_f$ is a minimal reduction of $\mathbb{I}$ with reduction number at most $n-1$.
\end{thm}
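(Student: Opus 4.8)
The plan is to reduce everything to the single inclusion $\mathbb{I}^n\subseteq J_f\,\mathbb{I}^{n-1}$. The reverse inclusion is automatic, since $J_f\subseteq\mathbb{I}$ is clear from the expression of $f_{x_j}$ in Notation~\ref{notation_main}; hence the equality $\mathbb{I}^n=J_f\,\mathbb{I}^{n-1}$ would give $\mathbb{I}^{n+k}=J_f\,\mathbb{I}^{n+k-1}$ for all $k\geq 0$ by multiplying through by $\mathbb{I}$ and inducting, so that $J_f$ is a reduction of $\mathbb{I}$ with reduction number at most $n-1$. Minimality is then free: $J_f$ is generated by the $n$ partials $f_{x_1},\ldots,f_{x_n}$, while the analytic spread of $\mathbb{I}$ equals $n$ because the Orlik--Terao algebra $\mathcal{F}(\mathbb{I})$ has dimension $n$; as a reduction cannot have fewer than $\ell(\mathbb{I})=n$ generators, $J_f$ is forced to be a minimal reduction.

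I would prove the inclusion generator by generator. The ideal $\mathbb{I}^n$ is generated by the monomials $M=L_{i_1}\cdots L_{i_n}$ with $1\leq i_1\leq\cdots\leq i_n\leq m$; to each such $M$ I attach its support $S=\{i_1,\ldots,i_n\}\subseteq\{1,\ldots,m\}$ and set $k=|S|$, the number of \emph{distinct} forms occurring. The argument is a downward induction on $k$. When $k=n$ the monomial is, after the evident relabeling symmetry in the $l_i$ (which preserves both $J_f$ and $\mathbb{I}$ as well as the $(n-1)$-genericity hypothesis), exactly $L_1L_2\cdots L_n$, and this base case is precisely the key lemma, Lemma~\ref{bottom}, where the genericity is used to treat the two possibilities $\mathrm{rank}\{l_{i_1},\ldots,l_{i_n}\}\in\{n-1,n\}$.

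For the descent, suppose $k\leq n-1$ and that every generator of $\mathbb{I}^n$ with strictly more than $k$ distinct factors already lies in $J_f\,\mathbb{I}^{n-1}$. Since $M$ has total degree $n>k$ in the $L_i$, some $i_0\in S$ occurs with multiplicity $\geq 2$; put $g:=M/L_{i_0}\in\mathbb{I}^{n-1}$, whose support is still all of $S$. Here is where $(n-1)$-genericity enters: because $|S|\leq n-1$, the forms $\{l_i\mid i\in S\}$ are $\K$-linearly independent, so their coefficient rows $\big((l_i)_{x_1},\ldots,(l_i)_{x_n}\big)$, $i\in S$, are independent vectors in $\K^n$, and one can solve for scalars $c_1,\ldots,c_n\in\K$ with $\sum_{j=1}^n c_j\,a_{i,j}=\delta_{i,i_0}$ for every $i\in S$, where $a_{i,j}=(l_i)_{x_j}$. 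Using $f_{x_j}=\sum_{i=1}^m a_{i,j}L_i$ I then compute $g\sum_{j=1}^n c_j f_{x_j}=\sum_{i=1}^m\big(\sum_j c_j a_{i,j}\big)L_i g=M+\sum_{i\notin S} b_i\,L_ig$, with $b_i:=\sum_j c_j a_{i,j}$. The left-hand side lies in $J_f\,\mathbb{I}^{n-1}$, while each $L_ig$ with $i\notin S$ has support $S\cup\{i\}$ of size $k+1$ and hence lies in $J_f\,\mathbb{I}^{n-1}$ by the induction hypothesis. Therefore $M\in J_f\,\mathbb{I}^{n-1}$, which closes the induction.

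The genuinely hard work is concentrated in the base case, Lemma~\ref{bottom}, which I take as given. Within the present argument the only delicate point is the descent step, and its essential feature is the linear-algebra step that annihilates the ``same-support'' contributions, namely the $L_ig$ with $i\in S$, $i\neq i_0$, so that only strictly-larger-support monomials survive: this is exactly what $(n-1)$-genericity buys, forcing the at most $n-1$ forms supported on $S$ to be independent and hence their coefficient functionals on $\K^n$ to be independent. The remaining care is purely bookkeeping: checking that removing one copy of the repeated index $i_0$ leaves the support unchanged while adjoining a new $i\notin S$ raises $k$ to $k+1$, and verifying the boundary case $k=n-1$, where the descent lands directly in the base case.
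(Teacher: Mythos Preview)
Your proof is correct and follows the same overall strategy as the paper: reduce to the inclusion $\mathbb{I}^n\subset J_f\mathbb{I}^{n-1}$, argue by downward induction on the support size of a standard generator, and invoke Lemma~\ref{bottom} for the base case $k=n$. The only difference is cosmetic: in the inductive step the paper performs a linear change of variables sending $l_{i_1},\ldots,l_{i_u}$ to coordinate variables $y_1,\ldots,y_u$, so that the single partial $\bar f_{y_k}$ already has coefficient $1$ on $\bar L_{i_k}$ and $0$ on the remaining $\bar L_{i_j}$ with $j\in S$; you achieve the identical effect by solving directly for scalars $c_1,\ldots,c_n$ with $\sum_j c_j a_{i,j}=\delta_{i,i_0}$ for $i\in S$, which is precisely what that change of variables encodes. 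Your linear-algebra phrasing is arguably cleaner in that it avoids the back-and-forth through the Appendix, but the two arguments are the same computation.
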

\begin{proof} Since $\mathbb{I}$ has analytic spread $n$ and $J_f$ is minimally generated by at most $n$ elements, if it is a reduction then it is minimal.
Thus, it suffices to prove that $\mathbb I^{n}\subset J_f\mathbb I^{n-1}.$
	
As in Notation~\ref{notation_main}, $\mathbb I=\langle L_1,\ldots,L_m\rangle,$ where $L_i=l_{1}\cdots \widehat{l_i}\cdots l_m,~i=1,\ldots,m.$

The standard generators of $\mathbb I^n$ have  the form

$$P:=L_{i_1}^{r_1}L_{i_2}^{r_2}\cdots L_{i_u}^{r_u},$$ where $1\leq i_1<\cdots<i_u\leq m$, $r_1+r_2+\cdots+r_u=n$ and $r_j\geq 1$, for all $j=1,\ldots,u$. The set of the above indices $\{i_1,\ldots,i_u\}$ will be called the {\em support} of the standard generator $P$.
For any such generator the cardinality of its support is $1\leq u\leq n$.

The method of the proof  that $P$ belongs to $J_f\mathbb I^{n-1}$ is reverse induction on $u$. Thus, set $v:=n-u$, so $0\leq v\leq n-1$.

\medskip

{\sc Initial step.} $v=0$. Then $u=n$, and hence $P=L_{i_1}L_{i_2}\cdots L_{i_n}$.
Therefore, the result follows from Lemma~\ref{bottom} by an obvious symmetry of the indices.

\medskip

{\sc Inductive step.} Let $v\geq 1$, and suppose that the result is true for $v-1$, for any $(n-1)$-generic arrangement), meaning that any standard generator thereof whose support has cardinality $u+1$ is in $J_f\mathbb I^{n-1}$.

Since $v\geq 1$, then $u\leq n-1$, and at least one of the exponents $r_j$ is greater than or equal to 2. Fix an index $k\in\{1,\ldots,u\}$ such that $r_k\geq 2$.

Since the arrangement is $(n-1)$-generic and $u\leq n-1$, the linear forms $l_{i_1},\ldots,l_{i_u}$ are $\K$-linearly independent. Thus, there is a change of variables
$$y_1\leftrightarrow l_{i_1},\ldots, y_u\leftrightarrow l_{i_u}, y_{u+1}\leftrightarrow x_{u+1},\ldots, y_n\leftrightarrow x_n,$$
where $y_1,\ldots,y_n$ are new variables over $\K$.
As per the Appendix, the change preserves the hypothesis and the sought conclusion; in particular, the transformed arrangement remains $(n-1)$-generic.

By this change, $f$  transforms into $\bar{f}:=y_1\cdots y_u \ell_{u+1}\cdots \ell_m,$ where $\ell_{u+1},\ldots,\ell_m$ are linear forms in $S:=\mathbb K[y_1,\ldots,y_n]$.
Setting $\bar{L}_i:=\bar{f}/\ell_i, i=1,\ldots,m$, where $\ell_j:=y_j$, for $j=1,\ldots,u$, the transform of $P$ is
\begin{equation*}
\bar{P}:=\bar{L}_{i_1}^{r_1}\bar{L}_{i_2}^{r_2}\cdots \bar{L}_{i_u}^{r_u}=y_1^{n-r_1}y_2^{n-r_2}\cdots y_u^{n-r_u}(\ell_{u+1}\cdots \ell_m)^n,
\end{equation*}
while the transform of the ideal $\mathbb{I}$ is the ideal $\bar{\mathbb I}:=\langle \bar{L}_1,\ldots,\bar{L}_m\rangle\subset S.$

Set $\displaystyle \bar{P}=\bar{L}_{i_k}\bar{P}'$, where $\bar{P}':=\bar{L}_{i_1}^{r_1}\bar{L}_{i_2}^{r_2}\cdots \bar{L}_{i_k}^{r_k-1}\cdots \bar{L}_{i_u}^{r_u}\in \bar{\mathbb I}^{n-1}.$

Coming back to our fixed index $k\in\{1,\ldots,u\}$ above, one has
$$\bar{f}_{y_k}= \bar{L}_{i_k}+\sum_{u+1\leq j\leq m} b_{k,j}\bar{L}_j,$$ where $b_{k,j}$ is the coefficient of $y_k$ in the expression of $\ell_j$.

That is,
$$\bar{L}_{i_k}=\bar{f}_{y_k}-\sum_{j\notin\{i_1,\ldots,i_u\}} b_{k,j}\bar{L}_j.$$
Plugging into $\displaystyle \bar{P}=\bar{L}_{i_k}\bar{P}'$ yields
$$\bar{P}=\bar{f}_{y_k}\bar{P}'-\sum_{j\notin\{i_1,\ldots,i_u\}} b_{k,j}\bar{P}'\bar{L}_j$$
Clearly, $\bar{f}_{y_k}\bar{P}'\in J_{\bar{f}}\bar{\mathbb I}^{n-1}$.
At the other end, for each $j\notin\{i_1,\ldots,i_u\}$, $\bar{P}'\bar{L}_j$ is a standard generator with support $X:=\{i_1,\ldots,i_k,\ldots,i_u,j\}$ since $r_k-1\geq 1$. Thus, $|X|=u+1$.

By the inductive hypothesis as applied to the transformed arrangement, each such $\bar{P}'\bar{L}_j$ is an element of $J_{\bar{f}}\bar{\mathbb I}^{n-1}$, and hence $\bar{P}\in J_{\bar{f}}\bar{\mathbb I}^{n-1}$ too.
Reversing back to the original variables $x_1,\ldots,x_n$, as per the Appendix, we obtain that $P\in J_f\mathbb I^{n-1}$, as was to be shown.
\end{proof}

\begin{rem}
The exact reduction number above is a function of the number of components of the arrangement -- a proof is given in \cite[Proposition 2.7]{GaSiTo}. Alternatively, if one can prove that for any  indecomposable hyperplane arrangement the reduction number is exactly $n$, then Corollary~\ref{partition_general} allows to deduce that for any  hyperplane arrangement the reduction number is exactly $n-c$, where $c$ is the number of components.
Note that this corollary is not applicable within the restricted class of $(n-1)$-generic arrangements since a component of rank $r$ might fail to be $(r-1)$-generic.
As a remedy, we believe that Theorem~\ref{generic} holds regardless of generic like restrictions.
\end{rem}

\subsection{On the linear type property}

Closing this part, we state the following

\begin{conj0}\label{conj_lin_type}  {\rm (${\rm char}(\K)\nmid m$)}\;
Let $f\in R=\K[x_1,\ldots,x_n]$ denote the defining polynomial of a central hyperplane arrangement of size  $m\geq n$. Then the Jacobian ideal $J_f$ is of linear type.
\end{conj0}
Recall that an ideal $I\subset R$  is said to be {\em of linear type} if the natural surjection from its symmetric algeba to its Rees algebra is an isomorphism.

We continue to assume that ${\rm char}(\K)\nmid m$ throughout.

Note the following  result, where an ideal $\mathfrak{a}\subset R$ is said to have property $G_{\infty}$ if $\mu(\mathfrak{a}_{\wp})\leq \Ht \wp$, for every prime ideal $\wp\subset R$.

\begin{lem}\label{reduction_is_ci}
Let $f\in R=\K[x_1,\ldots,x_n]$ denote the defining polynomial of a central hyperplane arrangement of size  $m\geq n$. If either $n\leq 3$ or else the arrangement is generic then $J_f$ satisfies property $G_{\infty}$.
\end{lem}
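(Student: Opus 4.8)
## Proof Proposal

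The goal is to show that $J_f$ satisfies property $G_{\infty}$, i.e. $\mu((J_f)_{\wp}) \leq \Ht\wp$ for every prime $\wp \subset R$. My plan is to reduce this to an assertion about the local number of generators at the minimal primes of $J_f$, and then exploit the known geometry of the singular locus of a hyperplane arrangement, which is governed by the flats (intersection subspaces) of $\mathcal{A}$.

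First I would recall from Theorem~\ref{saturation_and_regularity} that $J_f$ has height exactly $2$, and that $J_f$ and $\mathbb{I}$ share the same minimal primes, all of codimension $2$. This immediately disposes of the low-height primes: if $\Ht\wp = 0$ then $\wp$ is not above $J_f$ and the localization is the unit ideal; if $\Ht\wp = 1$ then $\wp$ cannot contain the height-$2$ ideal $J_f$ unless $(J_f)_\wp$ is already the unit ideal, so there is nothing to check; and for $\Ht\wp = 2$ it suffices to verify $\mu((J_f)_\wp) \leq 2$ at each of the (finitely many) minimal primes of codimension $2$. The codimension-$2$ minimal primes of $\mathbb{I}$, hence of $J_f$, correspond to the rank-$2$ flats of $\mathcal{A}$ — the linear subspaces $V(l_i, l_j)$ of codimension $2$ where two or more hyperplanes meet. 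The real content is therefore to bound $\mu$ locally at such a prime $\wp$ of codimension $2$, and then separately at primes $\wp$ with $\Ht\wp \geq n$, where the bound $\mu((J_f)_\wp) \leq n$ is automatic since $J_f$ is globally generated by the $n$ partials $f_{x_1}, \ldots, f_{x_n}$.

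The heart of the argument is the intermediate range $3 \leq \Ht\wp \leq n-1$, and this is where the genericity or low-dimension hypotheses enter. The key local computation is at a rank-$2$ flat $W = V(l_i, l_j)$: after localizing, only those $l_k$ vanishing on $W$ contribute to the singularity, and the number of such forms passing through a generic codimension-$2$ flat is controlled precisely by the $(n-1)$-genericity assumption. Concretely, I would change coordinates so that the forms through $W$ become $x_1, x_2$ (together with any others through that flat), compute the partials of $f$ locally, and use the Euler relation to show that $(J_f)_\wp$ needs at most $\Ht\wp$ generators. For $n \leq 3$ the only primes to examine are those of height $2$ and height $3 = n$, both handled above, so the statement is essentially immediate once the height-$2$ local bound $\mu \leq 2$ is established. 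For the generic case with arbitrary $n$, the $(n-1)$-genericity forces the codimension of any flat through which $k$ hyperplanes pass to be at least $\min(k, n-1)$, so that at a prime of height $h < n$ the number of contributing forms — and hence the local number of generators of $J_f$, after accounting for the Euler syzygy — stays below $h$.

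The step I expect to be the main obstacle is the precise local generator count at primes in the middle range $3 \leq \Ht\wp \leq n-1$ in the generic case: one must show that, although $J_f$ is generated by $n$ elements globally, the Euler relation together with the genericity forces enough linear syzygies over $R_\wp$ to cut the minimal number of local generators down to at most $\Ht\wp$. I would approach this by working at a prime $\wp$ corresponding to a flat of rank $h$, restricting attention to the (at most $h$, by genericity) hyperplanes containing that flat, and showing that the remaining partials become redundant after inverting the forms that are units at $\wp$. The low-height cases and the top case $\Ht\wp \geq n$ are routine; the genericity bookkeeping in the intermediate range is the delicate part, and it is exactly the place where replacing ``generic'' by the weaker ``$(n-1)$-generic'' or ``$n \leq 3$'' hypotheses is what makes the count go through.
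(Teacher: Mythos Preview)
Your overall reduction is sound: the only issue is at primes $\wp$ with $2\leq \Ht\wp\leq n-1$, since $\mu(J_f)\leq n$ globally handles $\wp=\fm$ and height $\leq 1$ is vacuous. Your plan to localize, factor $f=gv$ with $v$ a unit in $R_\wp$ and $g$ the product of the $l_i\in\wp$, and then count generators of $(J_f)_\wp=(J_g)_\wp$ can indeed be pushed through in the generic case: any $k$ forms in $\wp$ are independent, so $k\leq h$, and after a linear change one gets $g\in\K[x_1,\ldots,x_k]$, whence $(J_f)_\wp$ is generated by $g_{x_1},\ldots,g_{x_k}$. So your ``main obstacle'' is not really an obstacle once one observes that the partials $f_{x_j}$ with $j>k$ become multiples of $g$, which in turn lies in $\wp\cdot\langle g_{x_1},\ldots,g_{x_k}\rangle$.

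That said, the paper does something quite different and shorter. For the generic case it does \emph{not} perform any local generator count at all: it invokes Theorem~\ref{saturation_and_regularity}(ii), which says $J_f$ and $\mathbb{I}$ coincide locally on the punctured spectrum, and then quotes the fact (from \cite{GaSiTo}) that $\mathbb{I}$ itself satisfies $G_n$ in the generic case. Thus the entire ``intermediate range $3\leq\Ht\wp\leq n-1$'' that you flag as the delicate part is handled in one line by transferring the problem from $J_f$ to $\mathbb{I}$. For $n=3$, the paper again avoids a direct local computation: it shows by an elementary argument with the Euler relation that every height-$2$ associated prime has the form $\langle l_j,l_k\rangle$, changes coordinates so that this is $\langle x,y\rangle$, and then cites \cite[Lemma~2.4]{Sch1} for the conclusion that $(J_f)_\wp$ is a complete intersection there. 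Your approach is more self-contained and would work without appealing to \cite{Sch1} or \cite{GaSiTo}, at the cost of the explicit bookkeeping you anticipate; the paper's approach is terse but leans on prior results established either earlier in the paper or in the literature. One small caution: your invocation of Theorem~\ref{saturation_and_regularity} for the claim that $\Ht J_f=2$ and that $J_f,\mathbb{I}$ share minimal primes is slightly misplaced, since that theorem assumes genericity; for arbitrary $n\leq 3$ you need the elementary observation (reducedness of $f$ plus $J_f\subset\mathbb{I}$) directly.
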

\demo Since $J_f$ is globally generated by $n=\Ht\fm$ elements, it suffices to consider a prime $\wp\neq \fm$.
If $n=2$, $J_f$ is a complete intersection.
If $n=3$, the ideal $J_f$ is generically a complete intersection. Indeed, let $\wp$ be a minimal prime of $J_f$, necessarily of height $2$. Write $l_i=a_ix_1+b_ix_2+c_ix_3,\, i=1,\ldots,m$. Then
\begin{eqnarray}
f_{x_1}&=& a_1l_2\cdots l_m+\cdots+a_m l_1\cdots l_{m-1}\nonumber\\
f_{x_2}&=& b_1l_2\cdots l_m+\cdots+b_m l_1\cdots l_{m-1}\nonumber\\
f_{x_3}&=& c_1l_2\cdots l_m+\cdots+c_m l_1\cdots l_{m-1}\nonumber
\end{eqnarray} all belong to $\wp$. In particular, the Euler relation implies that $f=l_1l_2\cdots l_m\in\wp$, hence there exists $j\in\{1,\ldots,m\}$ with $l_j\in\wp$. But then, $a_j(f/l_j), b_j(f/l_j), c_j(f/l_j)$ also belong to $\wp$, and since $a_j, b_j, c_j$ cannot all be zero we have $f/l_j\in\wp$. Hence there exists $k\neq j$, with $l_k\in\wp$. Since  $l_j$ and $l_k$ are not proportional, then $\wp=\langle l_j,l_k\rangle$. By a change of variables, we may assume that $\wp=\langle x,y\rangle$, corresponding to the point $[0:0:1]$ of $\pp^2={\rm Proj}(\K[x,y,z])$. Therefore, the result follows from \cite[Lemma 2.4]{Sch1}.

Finally, if the arrangement is generic, then the result follows from Theorem~\ref{saturation_and_regularity} (ii) and the fact that $\mathbb{I}$ satisfies $G_n$ in the generic case (\cite[The proof of Proposition 4.1]{GaSiTo})
\qed

\begin{prop}\label{lineartype_case3}
{\rm Conjecture~\ref{conj_lin_type}} holds for $n=3$.
\end{prop}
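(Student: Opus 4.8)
The plan is to establish that $J_f$ is of linear type for $n=3$ by verifying the standard sufficient conditions for linear type, namely that $J_f$ satisfies property $G_\infty$ together with a depth condition on the Koszul homology (the sliding depth or $\mathcal{SD}$ condition), or equivalently by invoking the Artin--Nagata type criterion $(G_\infty$ plus annihilator/residual intersection hypotheses$)$. The first half of the work is already in hand: by Lemma~\ref{reduction_is_ci}, $J_f$ satisfies $G_\infty$ when $n=3$, since there we showed $J_f$ is generically a complete intersection and locally on the punctured spectrum the number of generators is bounded by the codimension. So the remaining task is to supply the complementary depth/residual condition that upgrades $G_\infty$ to linear type.

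The key steps, in order, would be as follows. First I would record that $J_f$ is an ideal of height $2$ in the three-dimensional ring $R=\K[x,y,z]$, globally generated by the three partials $f_x,f_y,f_z$, so $\mu(J_f)\leq 3 = \dim R$. Second, I would exploit the structural information from Section~1: in the generic case we know $\depth(R/J_f)=0$ and the resolution is explicitly two-step linear, and for arbitrary rank $3$ arrangements the analogous local description from Lemma~\ref{reduction_is_ci} (via \cite[Lemma 2.4]{Sch1}) controls $J_f$ at each of its minimal primes. Third, because $J_f$ has at most $3=\dim R$ generators and satisfies $G_\infty$, I would appeal to the criterion that an ideal generated by $n=\dim R$ elements satisfying $G_\infty$ in a Cohen--Macaulay ring is of linear type provided its Koszul homology modules satisfy the appropriate depth bound; for an almost complete intersection or an ideal minimally generated by $\dim R$ elements this reduces to checking that the first Koszul homology $H_1$ has no embedded behavior beyond what $G_\infty$ permits. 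I would verify this last condition using the explicit generator structure and the fact that $J_f$ and $\mathbb I$ share the same minimal primes, all of codimension $2$, together with Theorem~\ref{saturation_and_regularity}(ii) which identifies $J_f^{\rm sat}=\mathbb I$ when the arrangement is generic.

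The main obstacle I anticipate is the non-generic rank $3$ case, where Section~1's clean homological description is unavailable and one has only the local complete-intersection statement at minimal primes from Lemma~\ref{reduction_is_ci}. Here the delicate point is controlling the syzygies of $J_f$ at the maximal ideal $\fm$, where $J_f$ may genuinely need three generators and where $\depth(R/J_f)$ can be positive (as the Remark's characteristic-$2$ example and the free arrangement example show freeness is possible). The cleanest route around this is likely to reduce, via Theorem~\ref{generic}, to the fact that $J_f$ is a minimal reduction of $\mathbb I$ with reduction number at most $n-1=2$: since $J_f$ is generated by exactly $n=3$ elements satisfying $G_\infty$, and a reduction of reduction number at most $\dim R -1$ forces the analytic spread and the number of generators to match in a way compatible with the symmetric-to-Rees map being injective, one concludes that the defining equations of the Rees algebra are generated in degree one. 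I would therefore organize the final argument as: (i) $G_\infty$ holds for $n=3$ by Lemma~\ref{reduction_is_ci}; (ii) $J_f$ is minimally generated by $\leq 3$ elements; (iii) apply a linear-type criterion for ideals generated by $\dim R$ elements satisfying $G_\infty$ over a regular local (hence Cohen--Macaulay) ring, so that $G_\infty$ alone suffices in this borderline generator count. This step (iii) is where the real content lies, and I would expect to cite a result of the Huneke--Vasconcelos circle guaranteeing that an ideal with $\mu(I)\leq\dim R$ satisfying $G_\infty$ in a Cohen--Macaulay ring is of linear type once it is generically a complete intersection.
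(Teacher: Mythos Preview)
Your final summary (i)--(iii) is exactly the paper's proof, but you arrive there only after several unnecessary detours. The paper's argument is one line: Lemma~\ref{reduction_is_ci} gives $G_\infty$; since $J_f$ is generated by three elements in the three-dimensional polynomial ring $R=\K[x,y,z]$, one invokes \cite[Proposition~3.7]{SimVas1} (Simis--Vasconcelos, not Huneke--Vasconcelos) and concludes linear type directly. No sliding-depth verification, no Koszul homology analysis, no appeal to Theorem~\ref{generic} or to reduction numbers, and no use of Theorem~\ref{saturation_and_regularity}(ii) is required. In particular, your worry about the non-generic rank~$3$ case is misplaced: Lemma~\ref{reduction_is_ci} already handles \emph{all} rank~$3$ arrangements (the argument there via \cite[Lemma~2.4]{Sch1} does not assume genericity), so the generic/non-generic dichotomy never enters.

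The only genuine soft spot in your proposal is that you leave step (iii) as a hope (``I would expect to cite a result\ldots'') rather than naming the result. The relevant fact is that for an ideal $I$ of height $g$ in a Cohen--Macaulay ring with $\mu(I)\leq g+1$ (an almost complete intersection) satisfying $G_\infty$---equivalently here, generically a complete intersection---the symmetric algebra is already the Rees algebra; this is precisely what \cite[Proposition~3.7]{SimVas1} supplies. Once you know this citation, everything before your items (i)--(iii) can be deleted.
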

\demo
It follows from Lemma~\ref{reduction_is_ci} since in this case $J_f$ is generated by three elements (see \cite[Proposition 3.7]{SimVas1}).
\qed

\smallskip

Recall that the arrangement $\mathcal A$ is free if and only if ${\rm Derlog}(\mathcal A)$ is a free $R$-module or, equivalently, if and only if ${\rm Syz}(J_f)$ is a free $R$-module. In particular,  a rank$3$ central hyperplane arrangement $\mathcal A$ is free if and only if the presentation ideal of the symmetric algebra of $J_f$ is minimally generated by $2$ biforms.

\begin{cor}\label{free_rees}
	Let $\mathcal{A}$ be a rank $3$ central hyperplane arrangement, with defining polynomial $f\in R=\K[x,y,z]$. Then $\mathcal A$ is free if and only if the Rees algebra $\mathcal{R}_R(J_f)$  is a complete intersection.
\end{cor}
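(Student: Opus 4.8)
The plan is to leverage the two facts already in hand for rank $3$: that $J_f$ is of linear type (\propref{lineartype_case3}) and that, as recalled just before the statement, freeness of $\mathcal{A}$ is equivalent to the presentation ideal of $\mathrm{Sym}_R(J_f)$ being minimally generated by $2$ biforms. Since $J_f$ is of linear type, I would first identify $\mathcal{R}_R(J_f)\cong \mathrm{Sym}_R(J_f)$. Writing $J_f=\langle f_x,f_y,f_z\rangle$ (three generators) and letting $\phi$ be its presentation matrix, this gives $\mathcal{R}_R(J_f)\cong R[T_1,T_2,T_3]/\mathcal{L}$, where $\mathcal{L}$ is generated by the biforms $\sum_i\phi_{i1}T_i,\sum_i\phi_{i2}T_i,\ldots$ read off the columns of $\phi$. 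In particular $\mu(\mathcal{L})=\mu(\syz(J_f))$, the number of minimal first syzygies of $J_f$, and the whole question reduces to counting these generators.

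Next I would compute the codimension of $\mathcal{L}$ inside the $6$-dimensional ring $R[T_1,T_2,T_3]$. Because $J_f$ has height $2$ by \thmref{saturation_and_regularity}, hence positive grade, the Rees algebra has Krull dimension $\dim R+1=4$, so $\codim\mathcal{L}=6-4=2$. Then comes the decisive step, a complete-intersection criterion: since $R[T_1,T_2,T_3]$ is Cohen--Macaulay (indeed a polynomial ring over a domain), $\mathcal{R}_R(J_f)$ is a complete intersection if and only if $\mathcal{L}$ is generated by a regular sequence, and for an ideal of height $2$ this occurs exactly when $\mu(\mathcal{L})=\codim\mathcal{L}=2$. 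The forward implication is immediate; the converse holds because any two generators of a height-$2$ ideal in a Cohen--Macaulay domain automatically form a regular sequence (the first is a nonzero element, hence a nonzerodivisor, and the quotient by it is again Cohen--Macaulay, hence unmixed, so the second avoids every associated prime). Combining the two computations, $\mathcal{R}_R(J_f)$ is a complete intersection if and only if $\mu(\syz(J_f))=2$.

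Finally I would close the loop with the observation preceding the statement: a rank $3$ arrangement is free precisely when $\syz(J_f)$ is $R$-free, equivalently when $\mathcal{L}$ is minimally generated by $2$ biforms, i.e. $\mu(\syz(J_f))=2$. (For self-containedness: $\syz(J_f)\subset R^3$ is torsion-free of rank $2$, so if it is generated by $2$ elements the kernel of $R^2\surjects\syz(J_f)$ is a rank-$0$ submodule of a torsion-free module, hence zero, forcing freeness.) Chaining this equivalence with the previous one yields the corollary. I expect the only genuinely delicate point to be the converse half of the complete-intersection criterion — ensuring that $\mu(\mathcal{L})=\codim\mathcal{L}$ truly forces a regular sequence rather than merely matching generator and height counts — which rests squarely on the Cohen--Macaulayness of $R[T_1,T_2,T_3]$ and on linear type guaranteeing that $\mathcal{L}$ is the \emph{entire} defining ideal of $\mathcal{R}_R(J_f)$, not a proper subideal of it.
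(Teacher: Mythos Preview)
Your proof is correct and follows essentially the same route as the paper: both invoke \propref{lineartype_case3} to identify $\mathcal{R}_R(J_f)$ with $\mathrm{Sym}_R(J_f)$, observe that the presentation ideal has codimension $2$ in $R[T_1,T_2,T_3]$, and conclude that the complete-intersection property is equivalent to $\mu(\syz(J_f))=2$, i.e., to freeness. One minor quibble: your citation of \thmref{saturation_and_regularity} for $\Ht J_f=2$ is misplaced since that theorem assumes a generic arrangement with $m\geq n+1$; the fact holds for any reduced $f$ (as the paper itself notes in the proof of that theorem), and in any case you only need positive grade, which is immediate since $R$ is a domain and $J_f\neq 0$.
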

\demo
If $\mathcal A$ is free then its syzygy matrix is of size $3\times 2$, thus implying that the symmetric algebra of $J_f$ is a complete intersection. By Proposition~\ref{lineartype_case3}, $\mathcal{R}_R(J_f)$  is a complete intersection.
	
Conversely, since $\mu(J_f)=3$, the presentation ideal of $\mathcal{R}_R(J_f)$ has codimension $2$. Since it is a complete intersection, then it is generated by a regular sequence of two minimal bihomogeneous generators.
By Proposition~\ref{lineartype_case3}, these are a set of generators of the presentation ideal of the symmetric algebra of $J_f$.
Then the syzygy matrix of the $3$-generated ideal $J_f$ must have size $3\times 2$, i.e., $\mathcal A$ is free.
\qed

\begin{rem}
 Even for $n\geq 4$ and $m=n+1$ in the generic case the question is not trivially handled. By \cite[Corollary 4.5.4]{RoseTerao}, the minimal free resolution of $R/J_f$ has the same Betti number as those of a complete intersection of $n$-forms of length $n$. In particular,  $\syz(J_f)$ is generated by the ${n\choose 2}$ degree $m-n+1=2$ reduced Koszul syzygies of the partial derivatives.
It should be possible to  show that $\mathcal{I}$ in this situation is unmixed -- note that the argument may eventually profit from the knowledge that the Fitting ideals of $J_f$ have the expected codimension bounds.
At the other end, one of the available ways to prove the linear type property, namely, proving instead that the Koszul homology modules of $J_f$ satisfy the sliding-depth conditions, is not an obvious alternative as in no order the partial derivatives seem to be a $d$-sequence.
\end{rem}

\section{Forms of higher degrees}

Let $\mathbb K$ be an infinite field, of characteristic zero or of characteristic not dividing certain critical integers. Let $f_1,\ldots,f_m\in R:=\mathbb K[x_1,\ldots,x_n], m\geq 2$, be forms of degrees $\deg(f_i)=d_i\geq 2, i=1,\ldots,m$. Let $c:=\min\{m,n\}$.
We assume throughout that $n\geq 3$ --  otherwise  most  of the assumptions become  fuzzy or else the present objective becomes irrelevant.

\begin{defn}\label{transversal_forms}\rm
The forms $f_1,\ldots,f_m$ will be called {\em nearly transversal} if they satisfy the following conditions:
\begin{itemize}
	\item[(a)] For every $1\leq i\leq m$, the hypersurface Proj$(R/(f_i))$ is smooth -- in particular, $f_i$ is irreducible and depends on all variables.
	\item[(b)] The Jacobian matrix of any subset $S_c\subset \{f_1,\ldots,f_m\}$ with $c$ elements has maximal rank modulo primes of height $n-1$ containing $S_c$.
\end{itemize}
\end{defn}
By abuse, when a form satisfies condition (a) it will be said to be smooth.

Let $F=f_1f_2\cdots f_m$, and let $J_F\subset R$ be the Jacobian ideal of $F$, i.e. $J_F=\langle F_{x_1},\ldots,F_{x_n}\rangle$, where $\displaystyle F_{x_i}:=\frac{\partial F}{\partial x_i}, i=1,\ldots,n$. Condition (a) implies that, for any $1\leq i\leq m$, the Jacobian ideal $J_{f_i}$ is a complete intersection of codimension $n$.

Note that $J_F$ is contained in the ideal generated by the $(m-1)$-fold products of $\{f_1,\ldots,f_m\}$.
In particular, $J_F$ has codimension at most $2$. But, since $F$ is reduced, its codimension is exactly $2$.
Finally, observe that, though the given forms may have different degrees $d_1,\ldots,d_m$, the ideal $J_F$ is equigenerated in degree $d_1+\cdots+d_m-1$.

The goal is to show that ${\rm depth}(R/J_F)=0$, which is equivalent to showing that $J_F$ is non-saturated  with respect to ${\frak m}:=\langle x_1,\ldots,x_n\rangle$.
Such a result would be a rough analog of Rose--Terao--Yuzvinsky theorem of Subsection~\ref{Yuz}.
The  overall expectation is to induct on the number $m\geq 2$ of forms.
Thus, the case where $m=2$ sticks up as the first to look at.

\subsection{The two forms case}

Recall that $\syz (J_F)$ denotes the syzygy module of the partial derivatives of $F$ (Subsection~\ref{indeg_of_Jacobian}.)

\begin{prop}\label{two_forms_all}
Let $f,g\in R$ be forms of respective degrees $2\leq d\leq e$.
Setting $F:=fg$, one has:
\begin{enumerate}
	\item[{\rm (i)}] If $f$ is smooth then $g^2\in J_F^{\rm sat}$, where $F=fg$.
	\item[{\rm (ii)}]  Assume that ${\rm char}(\K)$ does not divide either $e$ or $d+e$. If $f$ and $g$ have no proper common factors, then
	$${\rm indeg}(\syz (J_F))\geq {\rm indeg}(\syz (J_g)).$$
	In particular, if $g$ is smooth then ${\rm indeg}(\syz (J_F))\geq e-1\geq  \displaystyle\left\lfloor \frac{d+e}{2}\right\rfloor -1$.
	\item[{\rm (iii)}] Assume that ${\rm char}(\K)$ does not divide any of $d,e,d+e$ and suppose that the following conditions are satisfied:
	\begin{enumerate}
		\item[{\rm (a)}] $f$ is smooth.
		\item[{\rm (b)}] The Jacobian matrix of $\langle f,g\rangle$ has maximal rank over $R/\wp$, for every prime ideal $\wp\supset \langle f,g\rangle$ of height $n-1$.
		\item[{\rm (c)}] The ideal $\langle f,J_g\rangle$ is $\fm$-primary.
	\end{enumerate}
Then $\depth {R/J_F}=0.$
\end{enumerate}
\end{prop}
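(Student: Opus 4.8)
The plan is to prove that $J_F$ is not saturated with respect to $\fm$, which is equivalent to $\depth R/J_F=0$. Concretely, I would exhibit an element of $J_F^{\rm sat}=J_F:_R\fm^\infty$ that is not in $J_F$, and the natural candidate is $g$ itself. Since $J_F$ is generated in the single degree $d+e-1$ and $d\geq 2$, one has $e<d+e-1$, so $[J_F]_e=0$ and hence $g\notin J_F$ for free. Thus the whole problem reduces to the membership $g\in J_F^{\rm sat}$, i.e.\ to showing $g\in J_FR_\wp$ for every homogeneous prime $\wp\neq\fm$.

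First I would record some reductions. From Euler's formula $\sum_i x_iF_{x_i}=(d+e)F$ and the hypothesis ${\rm char}(\K)\nmid(d+e)$ one gets $F=fg\in J_F$. Condition (b) forces $f$ and $g$ to be coprime: if an irreducible $p$ divided both, then at a height $n-1$ prime $\wp\supset\langle p\rangle\supseteq\langle f,g\rangle$ both rows $(f_{x_i})$ and $(g_{x_i})$ of the Jacobian matrix $M$ of $\langle f,g\rangle$ would be proportional to $(p_{x_i})$ modulo $\wp$, so $\rank M\le 1$, against (b). Finally, using that $J_f$ is $\fm$-primary (condition (a)), every minimal prime $\wp$ of $J_F$ contains $g$: if $g\notin\wp$, then $F_{x_i}=gf_{x_i}+fg_{x_i}\in\wp$ together with $F=fg\in\wp$ forces $f_{x_i}\in\wp$ for all $i$, whence $J_f\subset\wp$ and $\wp=\fm$, contradicting $\Ht\wp=2$.

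With these in hand I run a case analysis on a prime $\wp\neq\fm$. If $g\notin\wp$, then $\wp$ contains no minimal prime of $J_F$, so $J_FR_\wp=R_\wp\ni g$. If $g\in\wp$ but $f\notin\wp$, then $f$ is a unit in $R_\wp$ and $g=f^{-1}(fg)\in J_FR_\wp$ since $fg=F\in J_F$. The decisive case is $\wp\supset\langle f,g\rangle$: here I use $\rank M=2$ modulo $\wp$. After relabeling so that the $(1,2)$-minor of $M$ is a unit in $R_\wp$, I read the two identities $F_{x_1}=f_{x_1}g+g_{x_1}f$ and $F_{x_2}=f_{x_2}g+g_{x_2}f$ as a linear system in the unknowns $g,f$; by Cramer's rule both $g$ and $f$ become $R_\wp$-combinations of $F_{x_1},F_{x_2}$, so $g\in J_FR_\wp$.

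The main obstacle is that (b) only asserts maximal rank at primes of height $n-1$, while the decisive case needs $\rank M=2$ at \emph{every} prime $\wp\supset\langle f,g\rangle$ with $\wp\neq\fm$. I would bridge this by a semicontinuity argument: (b) says no height $n-1$ prime contains $\langle f,g\rangle+I_2(M)$, and since $R$ is catenary any prime of height $\le n-1$ containing this ideal can be enlarged to one of height exactly $n-1$; hence $V(\langle f,g\rangle+I_2(M))\subseteq\{\fm\}$, so $I_2(M)\not\subset\wp$ for all $\wp\supset\langle f,g\rangle$, $\wp\neq\fm$. Combining the three cases gives $g\in J_FR_\wp$ for all $\wp\neq\fm$, hence $g\in J_F^{\rm sat}\setminus J_F$ and $\depth R/J_F=0$. (Condition (c), which pushes the intersection of $V(f)$ with the singular locus of $g$ into the origin, is what one would invoke if instead one tracked $J_F^{\rm sat}$ through a primary decomposition in the spirit of Theorem~\ref{Yuz}; in the localization approach above it does not seem to be directly needed.)
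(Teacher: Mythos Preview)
Your argument is correct and, at its core, parallel to the paper's: both show $g\in J_F^{\rm sat}\setminus J_F$, and both rest on the identity $(f_{x_i}g_{x_j}-f_{x_j}g_{x_i})\,g=g_{x_i}F_{x_j}-g_{x_j}F_{x_i}\in J_F$, which is your Cramer numerator and the paper's Claim~2. The difference lies in how one shows that $I_2(M)$, together with $f$ (and $g$), cuts out at most $\{\fm\}$. The paper argues directly: picking a prime $\wp\neq\fm$ containing $\langle f,I_2(M)\rangle$, it uses Euler's relation on $g$ to drag $g$ into $\wp$, then splits into the case $J_g\subset\wp$ (dispatched by~(c)) versus $J_g\not\subset\wp$ (dispatched by~(b) after enlarging $\wp$ to height $n-1$). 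You instead start from primes $\wp\supset\langle f,g\rangle$ and use catenarity to enlarge to height $n-1$, where (b) alone forces $I_2(M)\not\subset\wp$; the localization/Cramer framing then finishes cleanly.

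Your observation that (c) is not directly needed is correct and is a genuine sharpening: in the paper's own dichotomy, the case $J_g\subset\wp$ makes the second row of $M$ vanish modulo $\wp$, so $\rank M\leq 1$, which already contradicts~(b) once $\wp$ is taken of height $n-1$. In fact one checks that (a), (b), and the characteristic assumptions imply (c). One cosmetic remark: your phrase ``contradicting $\Ht\wp=2$'' is slightly loose, since not every minimal prime of $J_F$ is forced to have height $2$ a priori; but the real contradiction is simply that a minimal prime equal to $\fm$ would make $J_F$ $\fm$-primary and the conclusion immediate. Alternatively, in the case $g\notin\wp$ you can bypass minimal primes altogether: from $f_{x_i}g^2\in J_F$ (as in part~(i)) and $g$ a unit in $R_\wp$ one gets $J_fR_\wp\subset J_FR_\wp$, and since $J_f$ is $\fm$-primary this gives $J_FR_\wp=R_\wp$.
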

\demo
(i) By Leibniz,
\begin{equation}\label{partial_sums_1}
F_{x_i}=f_{x_i}\cdot g+f\cdot g_{x_i},
\end{equation}
for every $1\leq i\leq n$.
Multiplying through by $g$ yields $f_{x_i}g^2\in J_F$ for every $1\leq i\leq n$.
Since $f$ is a smooth form of degree $d\geq 2$, $J_f$ is $\fm$-primary, and therefore $g^2\in (J_F)^{\rm sat}$.

(ii) Let $(P_1\cdots P_n)^t\in \syz (J_F)$ be a nonzero syzygy.
By (\ref{partial_sums_1}), one has $$f(P_1g_{x_1}+\cdots+P_ng_{x_n})+g(P_1f_{x_1}+\cdots+P_nf_{x_n})=0.$$
Since $\gcd(f,g)=1$, then
\begin{equation}\label{derlog_relations_G}
P_1g_{x_1}+\cdots+P_ng_{x_n}=H\cdot g,
\end{equation}
for a suitable form $H$ of degree $\deg P_i-1$.
Thus, $(P_1\cdots P_n)^t\in \mathrm{Derlog}(g)= \mathrm{Syz}(J_g)\oplus R\theta_E$, where $\theta_E$ denotes the Euler vector of $g$ (see (\ref{splitting})).
Therefore, $\displaystyle (P_1-\frac{1}{e} H x_1\cdots P_n-\frac{1}{e} H x_n)^t\in \mathrm{Syz}(J_g)$.
Now, if some $P_i-\frac{1}{e} H x_i$ does not vanish, then $\deg(P_i)=\deg P_i-\frac{1}{e} H x_i$ for all $i$, and hence $\deg(P_i)\geq {\rm indeg}(\syz (J_g))$.
Else,  $P_i=\frac{1}{e} H x_i$ for all $i$.
Since $H\neq 0$ in this situation, this means that $(x_1\cdots x_n)^t\in \syz (J_F)$. But since $d+e\neq 0$ by assumption, then the Euler relation implies that $F=0$ -- an absurd.

The supplementary assertion is clear since for a complete intersection the syzygies are the Koszul relations. The appended inequality is obvious since $e=\left\lfloor \frac{2e}{2}\right\rfloor\geq  \displaystyle\left\lfloor \frac{d+e}{2}\right\rfloor$.

(iii) Let $\Theta$ denote the Jacobian matrix of $\langle f,g \rangle$.

\smallskip

{\sc Claim 1.} The homogeneous ideal $\mathbb J:=\langle f, I_2(\Theta)\rangle$ is $\fm$-primary.

Suppose otherwise and pick a prime ideal $\mathbb J\subset\wp\subset R$ properly contained in $\fm$.
Since $f\in \wp$ is assumed to be smooth, there is a partial derivative, say, $f_{x_1}$ not belonging to $\wp$.
At the other end,  $g_{x_i}f_{x_1}-g_{x_1}f_{x_i}\in \mathbb J\subset \wp$, for every $i$.
Using the Euler relation of $g$, we can write
\begin{eqnarray*}
egf_{x_1}&=&x_1f_{x_1}g_{x_1}+x_2f_{x_1}g_{x_2}+\cdots + x_nf_{x_1}g_{x_n}\\
&=& x_1f_{x_1}g_{x_1}+ [x_2(f_{x_1}g_{x_2}-f_{x_2}g_{x_1})+ x_2f_{x_2}g_{x_1}]+\cdots +[x_n(f_{x_1}g_{x_n}-f_{x_n}g_{x_1}) +x_nf_{x_n}g_{x_1}]\\
&=& g_{x_1}(x_1f_{x_1}+\cdots + x_nf_{x_n}) + x_2(f_{x_1}g_{x_2}-f_{x_2}g_{x_1}) + \cdots + x_n(f_{x_1}g_{x_n}-f_{x_n}g_{x_1})\\
&=& dg_{x_1}f + x_2(f_{x_1}g_{x_2}-f_{x_2}g_{x_1}) + \cdots + x_n(f_{x_1}g_{x_n}-f_{x_n}g_{x_1}).
\end{eqnarray*}
Clearly, the last expression belongs to $\mathbb J\subset \wp.$ Since $f_{x_1}\notin \wp$ then $g\in\wp$.
We thus conclude that $\langle f,g\rangle\subset \wp$.

Next, we take care of the dichotomy as to whether $g_{x_1}\in \wp$ or $g_{x_1}\notin \wp$.
If  $g_{x_1}\in \wp$ is the case, then $g_{x_i}\in\wp$ for every $i$, since $g_{x_i}f_{x_1}-g_{x_1}f_{x_i}\in \mathbb J\subset \wp$, and $f_{x_1}\notin \wp$. Therefore, if $g_{x_1}\in \wp$, then $\langle f,J_g\rangle\subset \wp$ as well, contradicting assumption (c).

Thus, we must have $g_{x_1}\notin \wp$.
We may assume that $\wp$ has height $n-1$.
We then contend that this contradicts condition (b) in the hypothesis.
And indeed, one has the following nonzero relation between the two rows of the Jacobian matrix of $\langle f,g\rangle$:
$$(g_{x_1}, g_{x_2},\ldots, g_{x_n})f_{x_1}-(f_{x_1}, f_{x_2},\ldots, f_{x_n})g_{x_1}=(0, f_{x_1}g_{x_2}-f_{x_2}g_{x_1}, \ldots, f_{x_1}g_{x_n}-f_{x_n}g_{x_1}),$$
which vanishes modulo $\wp$. Since neither $f_{x_1}$ nor $g_{x_1}$ vanishes modulo $\wp$, we have a contradiction.

This proves  Claim 1.

\medskip

{\sc Claim 2.}
$g\mathbb J\subset J_F$.

From (\ref{partial_sums_1}) one easily deduces the relations
$$g_{x_i}F_{x_j}-g_{x_j}F_{x_i}=(g_{x_i}f_{x_j}-g_{x_j}f_{x_i})g,$$
for $1\leq i<j\leq n$.
Therefore, $g\,I_2(\Theta)\subset J_F$.
At the other end, $gf=F\in J_F$ by the Euler relation of $F$.
Together they imply the claim.

Collecting the assertions of the two claims, one gets $g\in (J_F)^{\rm sat}$.
Clearly, $g\notin J_F$ for degree reasons, since ${\rm indeg}(J_F)=e+d-1$.
Therefore, $\fm$ is an associated prime of $R/J_F$, as was to be shown.
\qed

\begin{rem}
In the light of the proposition, perhaps it should be noted that finding reduced homogeneous free divisors looks in a sense like a sparse deal.  Accordingly, the above result tells us that if the data are in some sort of good position, the chances are dimmed.
 Note that the result generates no conflict with \cite[Theorem 2.12]{hom_divisors} because in loc. cit. the form $g$ -- that plays the role of $f$ here -- is only smooth when $d\leq 2$.
\end{rem}

\begin{quest}\rm
Note that the hypotheses of Proposition~\ref{two_forms_all} (iii) imply that $\langle f,g\rangle$ is an isolated singularity, i.e., ${\rm Proj} (R/\langle f,g\rangle)$ is smooth. Is the converse true?
\end{quest}

\subsection{Arbitrary number of forms}

We now go back to the case of forms $f_1,\ldots,f_m$ of respective degrees $d_1,\ldots,d_m$.
As before, set $F=f_1\cdots f_m$.

\begin{prop} \label{many_forms} ${\rm indeg}(\syz (J_F))\geq
	 \displaystyle\left\lfloor \frac{d_1+\cdots+d_m}{m}\right\rfloor-1$.
\end{prop}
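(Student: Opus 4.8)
The plan is to run, simultaneously against each factor $f_l$, the same mechanism used in Proposition~\ref{two_forms_all}(ii), exploiting that $f_l$ is coprime to the product of the remaining factors. Fix a nonzero homogeneous syzygy $(P_1,\ldots,P_n)^t\in\syz(J_F)$ realizing $\delta:={\rm indeg}(\syz(J_F))$, so all entries have degree $\delta$. By Leibniz one has $F_{x_i}=\sum_{k=1}^m (f_k)_{x_i}\,(F/f_k)$, whence the relation $\sum_i P_iF_{x_i}=0$ rewrites as $\sum_{k=1}^m (F/f_k)\,D_k=0$, where $D_k:=\sum_i P_i(f_k)_{x_i}$ is the derivative of $f_k$ in the direction $(P_1,\ldots,P_n)$.

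First I would show $f_l\mid D_l$ for every $l$. Reducing the displayed relation modulo $f_l$ kills each summand with $k\neq l$, since $f_l$ divides $F/f_k$ in those cases, leaving $(F/f_l)\,D_l\equiv 0\pmod{f_l}$. As each $f_l$ is smooth, hence irreducible, $R/(f_l)$ is a domain and $F/f_l=\prod_{j\neq l}f_j$ is a nonzerodivisor modulo $f_l$ (the $f_j$ being pairwise non-associate irreducibles). Thus $f_l\mid D_l$, i.e. $(P_1,\ldots,P_n)^t\in{\rm Derlog}(f_l)$ for every $l$. Using the splitting ${\rm Derlog}(f_l)=\syz(J_{f_l})\oplus R\theta_E$ (valid as ${\rm char}(\K)\nmid d_l$), I write $D_l=H_lf_l$ with $\deg H_l=\delta-1$ and set $S^{(l)}:=(P_i-\tfrac{1}{d_l}H_lx_i)_i\in\syz(J_{f_l})$.

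Next I would verify $S^{(l)}\neq 0$ for every $l$. If $S^{(l)}=0$ for some $l$, then $(P_1,\ldots,P_n)=\tfrac{1}{d_l}H_l(x_1,\ldots,x_n)$ is a multiple of the Euler vector, so the Euler relation for $F$ gives $0=\sum_iP_iF_{x_i}=\tfrac{1}{d_l}H_l(\sum_j d_j)F$; since ${\rm char}(\K)\nmid\sum_j d_j$ and $F\neq 0$, this forces $H_l=0$ and hence the syzygy is zero, a contradiction. Because $f_l$ is smooth, $J_{f_l}$ is a complete intersection of $n$ forms of degree $d_l-1$, so $\syz(J_{f_l})$ is generated by Koszul relations and ${\rm indeg}(\syz(J_{f_l}))=d_l-1$. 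Consequently the nonzero element $S^{(l)}$, whose entries have degree $\delta$, forces $\delta\geq d_l-1$ for every $l$.

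Finally, $\delta\geq\max_l(d_l-1)\geq\frac{d_1+\cdots+d_m}{m}-1$, and integrality of $\delta$ yields $\delta\geq\left\lfloor\frac{d_1+\cdots+d_m}{m}\right\rfloor-1$; equivalently, were $\delta<\frac{\sum_j d_j}{m}-1$ then some $d_l-1>\delta$ by pigeonhole, contradicting $\delta\geq d_l-1$. The main obstacle is the simultaneous coprimality step: securing $f_l\mid D_l$ for all $l$ at once, rather than for a single distinguished factor as in the two-form case. This is precisely where smoothness — hence irreducibility and pairwise coprimality of the $f_j$ — is essential, and it is what lets the individual bounds $\delta\geq d_l-1$ accumulate into the stated average bound.
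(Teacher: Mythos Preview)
Your proof is correct, and it takes a genuinely different route from the paper. The paper argues by induction on $m$: it splits off the factor $f_1$ of least degree, writes $F=f_1G$, and shows that a putative syzygy of $J_F$ of small degree would yield (after subtracting a multiple of the Euler vector) a nonzero syzygy of $J_G$ of the same degree; the inductive hypothesis applied to $G$ then gives the contradiction. The base case $m=2$ is Proposition~\ref{two_forms_all}(ii).

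You bypass the induction entirely by running the Derlog argument against \emph{every} factor $f_l$ at once, using that $f_l$ is irreducible and coprime to $F/f_l$ to force $f_l\mid D_l$ for all $l$. This is cleaner and in fact yields the sharper inequality
\[
{\rm indeg}(\syz(J_F))\ \ge\ \max_{1\le l\le m}\,(d_l-1),
\]
of which the stated floor-of-the-average bound is an immediate consequence. The paper's inductive scheme only isolates one factor at a time and does not make this stronger bound visible, although it is implicit once one traces through. Both arguments rest on the same hypotheses (smoothness of each $f_l$ so that $J_{f_l}$ is a complete intersection with ${\rm indeg}(\syz(J_{f_l}))=d_l-1$, and ${\rm char}(\K)$ avoiding the relevant degrees), so neither has a real advantage in generality; yours is simply more direct.
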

\begin{proof}
	We prove the result by induction on $m\geq 2$.
For $m=2$, this is Proposition~\ref{two_forms_all} (ii).
		
For the inductive step, suppose $m\geq 3$.
Set $\delta:=\displaystyle\left\lfloor \frac{d_1+\cdots+d_m}{m}\right\rfloor$.
Say, $d_1$ is least among the degrees. Set $f:=f_1$, and $G:=F/f$. Let $\displaystyle\delta':=\left\lfloor \frac{d_2+\cdots+d_m}{m-1}\right\rfloor$. Clearly, $\delta'\geq \delta$ since  $(m-1)(d_1+\cdots+d_m)\leq m(d_2+\cdots+d_m)$.

 Suppose to the contrary, that $J_F$ has a syzygy $(P_1,\ldots,P_n)$ of degree $\delta-2$. By the same token as in the proof of Proposition~\ref{two_forms_all} (ii), since $\gcd(f,G)=1$ (because $\gcd(f_i,f_j)=1, i\neq j$), we have the following syzygies of degree $\delta-2$
		\begin{eqnarray}
		(P_1-(H/d)x_1)G_{x_1}+\cdots+(P_n-(H/d)x_n)G_{x_n}&=&0\nonumber\\
		(P_1+(H/d_1)x_1)f_{x_1}+\cdots+(P_n+(H/d_1)x_n)f_{x_n}&=&0,\nonumber
		\end{eqnarray} where $d:=\deg(G)$. By the inductive hypothesis, $J_G$ doesn't have any syzygy of degree $\leq \delta'-2$. Then, from the first equation we have $P_i=(H/d)x_i$, for all $i=1,\ldots,n$. This leads to $x_1f_{x_1}+\cdots+x_nf_{x_n}=0$, contradicting the Euler relation of $f\neq 0$.
\end{proof}

\medskip

\begin{prop}\label{unbalanced_degrees} Suppose there exist $j\in \{1,\ldots,m\}$ such that $$ (d_1+\cdots+d_{j-1}+d_{j+1}+\cdots+d_m)-d_j\leq \left\lfloor\frac{d_1+\cdots+d_{j-1}+d_{j+1}+\cdots+d_m}{m-1}\right\rfloor-3.$$
	Then  ${\rm depth}(R/J_F)=0$.
\end{prop}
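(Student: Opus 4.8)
The plan is to prove $\depth(R/J_F)=0$ by exhibiting $(J_F)^{\rm sat}\supsetneq J_F$; concretely I will construct a nonzero form in $(J_F)^{\rm sat}$ of degree strictly below the generation degree $e_0:=d_1+\cdots+d_m-1$ of $J_F$, which then cannot lie in $J_F$ for degree reasons. Fix the index $j$ given by the hypothesis, set $f:=f_j$, $G:=F/f_j=\prod_{i\neq j}f_i$ and $D:=\deg G=\sum_{i\neq j}d_i$, so $F=fG$ and $e_0=d_j+D-1$. The construction rests on an elementary identity: if $\theta=\sum_i Q_i\,\partial/\partial x_i$ satisfies $\theta(G)=0$, i.e. $(Q_1,\ldots,Q_n)\in\syz(J_G)$, then Leibniz gives $\theta(F)=\theta(f)\,G$, and since $\theta(F)=\sum_i Q_iF_{x_i}\in J_F$ this places $w:=\theta(f)=\sum_i Q_i f_{x_i}$ in the colon ideal $(J_F:G)$.

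First I would bound the degree of $w$ and ensure it is nonzero. Taking $(Q_i)$ a minimal syzygy of $J_G$, Proposition~\ref{many_forms} applied to the $m-1$ factors of $G$ controls ${\rm indeg}(\syz(J_G))$ from below by $\lfloor D/(m-1)\rfloor-1$, while the Koszul syzygies of $J_G$ bound it above by $D-1$; hence $\deg w=(d_j-1)+{\rm indeg}(\syz(J_G))\leq d_j+D-2<e_0$. The non-vanishing $w\neq 0$ is the statement that $(Q_i)$ is not simultaneously a syzygy of $J_f$; since the $f_i$ are pairwise coprime the forms $f$ and $G$ share no derivation annihilating both in low degree, and I would make this precise exactly as in the argument of Proposition~\ref{two_forms_all}(ii), where an element of $\syz(J_F)$ that maps to $0$ in $\syz(J_G)$ is forced to be an Euler multiple and hence to contradict the Euler relation of $f$.

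The heart of the matter — and the step I expect to be the main obstacle — is to promote $w\in(J_F:G)$ to $w\in(J_F)^{\rm sat}$, equivalently to prove that $(J_F:w)$ is $\fm$-primary. Localizing first shows that any prime containing $(J_F:w)$ must contain both $f$ and $J_F$ (off $V(f)$ one has $G=f^{-1}F\in J_F$ locally), so from $F_{x_i}=f_{x_i}G+fG_{x_i}$ one gets $f_{x_i}G$ in every such prime; ruling out the non-maximal primes is what forces the argument to be genuinely arithmetic rather than formal. Here I would run a descent analogous to Lemma~\ref{bottom} and Theorem~\ref{generic}, rewriting the products $\fm^N w$ through the partials $F_{x_i}$ by repeatedly substituting the relations $\theta(f)\,G\in J_F$ and $f_{x_i}G^2=GF_{x_i}-FG_{x_i}\in J_F$ (the latter being the several-forms analogue of Proposition~\ref{two_forms_all}(i)); it is precisely in keeping this substitution within $J_F$ that the unbalanced inequality $\,(d_1+\cdots+\widehat{d_j}+\cdots+d_m)-d_j\leq\lfloor D/(m-1)\rfloor-3\,$ is consumed, the constant $-3$ providing the degree room the bookkeeping requires. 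A clean shortcut exists when $f_j$ is smooth: then $J_{f_j}$ is $\fm$-primary and $f_{j,x_i}G^2\in J_F$ already yields $G^2\in(J_F)^{\rm sat}$, reducing everything to the degree test $2D<e_0$; the content of the proposition is to replace that smoothness by the purely numerical unbalanced hypothesis, and carrying the saturation through without it is where the real difficulty sits.
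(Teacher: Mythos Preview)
You have misread both the standing hypotheses and the role of the numerical inequality. Throughout Section~3 the forms $f_1,\ldots,f_m$ are assumed nearly transversal; in particular each $f_i$ is smooth. The paper's proof therefore \emph{does} take what you call the ``clean shortcut'': with $f=f_j$ and $G=F/f_j$, Proposition~\ref{two_forms_all}(i) gives $G^2\in(J_F)^{\rm sat}$ immediately from the smoothness of $f_j$. The unbalanced inequality is not there to replace smoothness; it is consumed entirely in proving $G^2\notin J_F$. And that step is \emph{not} the crude degree test $2D<e_0$ you suggest --- indeed $2\deg G<\deg F-1$ would require $D-d_j\leq -2$, which the hypothesis $D-d_j\leq\lfloor D/(m-1)\rfloor-3$ does not imply. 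Instead one argues by contradiction: if $G^2=\sum L_iF_{x_i}$ with $\deg L_i=D-d_j+1$, then Leibniz and $\gcd(f,G)=1$ force $\sum L_iG_{x_i}=BG$ for some $B$, so $(L_i-(B/D)x_i)$ is a syzygy of $J_G$ of degree $D-d_j+1\leq\lfloor D/(m-1)\rfloor-2$, below the bound of Proposition~\ref{many_forms}; hence $L_i=(B/D)x_i$ and one derives $G\in\langle f\rangle$, absurd.

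Your alternative line --- building $w=\theta(f)$ from a minimal $\theta\in\syz(J_G)$ and then ``promoting'' $w\in(J_F:G)$ to $w\in(J_F)^{\rm sat}$ via a descent modeled on Lemma~\ref{bottom} and Theorem~\ref{generic} --- is not a proof as written: the step from a colon by $G$ to a colon by a power of $\fm$ is exactly the hard part, and the hyperplane-arrangement descent you invoke has no evident analogue here (those arguments rely on the very special combinatorics of products of linear forms). Since smoothness of $f_j$ is available, this entire detour is unnecessary.
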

\begin{proof} Let $f:=f_j$ and $G:=F/f$. By Proposition~\ref{two_forms_all} (i), we have $G^2\in (J_F)^{\rm sat}$. We will show now that $G^2\notin J_F$.
	
	Let $e:=d_j=\deg(f)$, and $d:=d_1+\cdots+d_{j-1}+d_{j+1}+\cdots+d_m=\deg(G)$. Suppose $G^2\in J_F$. Then there exists $L_1,\ldots,L_n\in R_{d-e+1}$, such that
	$$G^2=(L_1f_{x_1}+\cdots+L_nf_{x_n})G+f(L_1G_{x_1}+\cdots+L_nG_{x_n}).$$
	
	By grouping, we have
	
	$$G[G-(L_1f_{x_1}+\cdots+L_nf_{x_n})]=(L_1G_{x_1}+\cdots+L_nG_{x_n})f.$$
	Clearly, $\gcd(f,G)=1$ since $\gcd(f_i,f_j)=1$ for every $i\neq j$. therefore,
	
	\begin{eqnarray}
	G-(L_1f_{x_1}+\cdots+L_nf_{x_n})&=&B\cdot f\nonumber\\
	L_1G_{x_1}+\cdots+L_nG_{x_n}&=& B\cdot G,\nonumber
	\end{eqnarray} for some $B\in R_{d-e}$.
	
	Euler's relation says that $dG=x_1G_{x_1}+\cdots+x_nG_{x_n}$, which plugged into the second equation leads to the syzygy of degree $d-e+1$.
	
	$$(L_1-(B/d)x_1)G_{x_1}+\cdots+(L_n-(B/d)x_n)G_{x_n}=0.$$
	
	Since from hypotheses $d-e\leq \lfloor d/(m-1)\rfloor-3$, from Proposition~\ref{many_forms}, we have that $L_i=(B/d)x_i$ for all $i=1,\ldots,n$. This plugged in the first equation leads to $$G=Bf(e/d-1);$$ a contradiction. \end{proof}

\section{Appendix}

\subsection{Calculus on Jacobian ideals}

In order to ease the amount of calculations needed in our proofs, we often appeal to change of variables/coordinates technique to simplify the shape of the defining polynomial of our hyperplane arrangement. The chain rule of partial differentiation and linear algebra will safely allow us to do this without altering the homological properties of the Jacobian ideal we are studying throughout these notes.

Concretely, let $F=l_1\cdots l_nl_{n+1}\cdots l_m\in R:=\mathbb K[x_1,\ldots,x_n]$, where $l_{i_1},\ldots,l_{i_n}$ are linearly independent linear forms in $R$, for some distinct indices $i_1,\ldots,i_n\in\{1,\ldots,m\}$. After a relabeling, we can assume that $i_1=1,\ldots,i_n=n$. Then there is a unique change of variables $l_1\leftrightarrow y_1,\ldots, l_n\leftrightarrow y_n$, given by $$[x_1,\ldots,x_n]\cdot M=[y_1,\ldots,y_n],$$ where $M$ is the $n\times n$ invertible matrix whose columns are the coefficients of the linear forms $l_1,\ldots,l_n$, respectively.

Then, we have $[x_1,\ldots,x_n]=[y_1,\ldots,y_n]\cdot M^{-1}$, and hence $x_1=L_1,\ldots, x_n=L_n$, where $L_1,\ldots,L_n$ are linear forms in $S:=\mathbb K[y_1,\ldots,y_n]$. Then we consider

$$G:=F(L_1,\ldots,L_n)=y_1\cdots y_n(l_{n+1}'\cdots l_m')\in S=\mathbb K[y_1,\ldots,y_n].$$

The chain rule says that for each $i=1,\ldots,n$, in $\mathbb K[y_1,\ldots,y_n]$ we have

$$\frac{\partial G}{\partial y_i}=\frac{\partial F}{\partial x_1}(L_1,\ldots,L_n)\cdot \frac{\partial L_1}{\partial y_i}+\cdots +\frac{\partial F}{\partial x_n}(L_1,\ldots,L_n)\cdot \frac{\partial L_n}{\partial y_i},$$ or, as vectors with entries elements of $S$, we can write

$$[G_{y_1},\ldots,G_{y_n}]=[F_{x_1}(L_1,\ldots,L_n),\ldots,F_{x_n}(L_1,\ldots,L_n)]\cdot (M^{-1})^\tau,$$ where $\tau$ denotes the transposition of the matrix. Since the matrix $(M^{-1})^\tau$ is invertible, in terms of ideals of $S=\mathbb K[y_1,\ldots,y_n]$, we have

$$\langle G_{y_1},\ldots,G_{y_n}\rangle=\langle F_{x_1}(L_1,\ldots,L_n),\ldots,F_{x_n}(L_1,\ldots,L_n)\rangle.$$

Step-by-step, this is the path we are following:

\begin{itemize}
  \item[(1)] We start with $J_F=\langle F_{x_1},\ldots, F_{x_n}\rangle$ in $R=\mathbb K[x_1,\ldots,x_n]$.
  \item[(2)] We make the change of variables (from $x_i$'s to $y_j$'s) $x_1\leftrightarrow L_1,\ldots,x_n\leftrightarrow L_n$, to obtain an ideal $\bar{J}$ in $S=\mathbb K[y_1,\ldots,y_n]$ that is generated by $F_{x_1}(L_1,\ldots,L_n),\ldots,F_{x_n}(L_1,\ldots,L_n)$.
  \item[(3)] The ideal $\bar{J}$ of $S$ obtained at (2) has the same homological properties as the ideal $J_F$ of $R$.
  \item[(4)] But, as we observed, $\bar{J}=J_G\subset S$. Therefore, in the shape of $F$, we can assume that $l_1=x_1,\ldots,l_n=x_n$, as we did throughout this paper.
\end{itemize}

\medskip

This procedure works also when we want to show inclusions of the form $\mathbb I^k\subseteq J_F \mathbb I^{k-1}$, for some $k$.

Denote $\bar{\mathbb I}$ to be the ideal of $S=\mathbb K[y_1,\ldots,y_n]$ generated by all $(m-1)$-fold products of linear forms $y_1,\ldots,y_n,l_{n+1}',\ldots,l_m'$. Suppose we showed $$\bar{\mathbb I}^k\subseteq J_G\bar{\mathbb I}^{k-1}$$ in $S=\mathbb K[y_1,\ldots,y_n]$.

If we go back to variables $x_1,\ldots,x_n$, via the change of variables $y_1\leftrightarrow l_1,\ldots,y_n\leftrightarrow l_n$, since $J_G$ will transform into $J_F$ (by making this substitution in the chain rule above, and because $M$ is invertible), and since $\bar{\mathbb I}$ will transform back into $\mathbb I$, we will get also that $\mathbb I^k\subseteq J_F \mathbb I^{k-1}$.

\medskip

\begin{exm} Suppose $n=2$, and say, $$F=\underbrace{(x_1+x_2)}_{l_1}\underbrace{(2x_1+x_2)}_{l_2}\underbrace{(x_1-x_2)}_{l_3} \underbrace{(x_1+3x_2)}_{l_4}\in R:=\mathbb K[x_1,x_2].$$ Then we have:

$$[x_1, x_2] \underbrace{\left[\begin{matrix}
1 & 2\\
1 & 1
\end{matrix}
\right]}_{M}=[y_1, y_2].
$$ Then, $$[x_1, x_2]=[y_1,y_2] \underbrace{\left[\begin{matrix}
-1 & 2\\
1 & -1
\end{matrix}
\right]}_{M^{-1}}=[\underbrace{-y_1+y_2}_{L_1},\underbrace{2y_1-y_2}_{L_2}],
$$ and so $$G=y_1y_2(-3y_1+2y_2)(5y_1-2y_2)\in S=\mathbb K[y_1,y_2].$$

We have

$$F_{x_1}=8x_1^3+21x_1^2x_2+2x_1x_2^2-7x_2^3 \mbox{ and }F_{x_2}=7x_1^3+2x_1^2x_2-21x_1x_2^2-12x_2^3,$$ as well as

$$G_{y_1}=-45y_1^2y_2+32y_1y_2^2-4y_2^3\mbox{ and }G_{y_2}=-15y_1^3+32y_1^2y_2-12y_1y_2^2.$$

Then we have:

\begin{eqnarray}
F_{x_1}(L_1,L_2)&=&-30y_1^3+19y_1^2y_2+8y_1y_2^2-4y_2^3\nonumber\\
F_{x_2}(L_1,L_2)&=&-15y_1^3-13y_1^2y_2+20y_1y_2^2-4y_2^3,\nonumber
\end{eqnarray} which indeed satisfy

$$[G_{y_1}, G_{y_2}]=[F_{x_1}(L_1,L_2), F_{x_2}(L_1,L_2)] \underbrace{\left[\begin{matrix}
-1 & 1\\
2 & -1
\end{matrix}
\right]}_{(M^{-1})^{\tau}}.$$
\end{exm}

\vskip 0.3in

\noindent {\bf Acknowledgment.} Ricardo Burity thanks the Department of Mathematics of the University of Idaho (USA) for the hospitality during his stay. He is grateful as well to CAPES (MEC, Brazil) for funding the current Post-Doctoral scholarship at the University of Idaho.

\bigskip

\bibliographystyle{amsalpha}

\end{document}